\documentclass[a4paper,11pt]{article}

\usepackage[utf8]{inputenc}
\usepackage[british]{babel}

\usepackage[mono=false]{libertine}
\usepackage[T1]{fontenc}

\usepackage{amsthm}
\usepackage[cmintegrals,libertine]{newtxmath}
\usepackage[cal=euler, scr=boondoxo]{mathalfa}
\useosf

\usepackage{microtype}

\usepackage{numprint}

\usepackage[margin=2.5cm]{geometry}
\linespread{1.1}

\usepackage{multirow}

\usepackage{tikz}

\usepackage[font=sf]{caption}
	\captionsetup{width=.9\textwidth}

\usepackage{hyperref}
\hypersetup{
%	hidelinks,
%	colorlinks=false,
	colorlinks=true,
		citecolor=blue!60!black,
		linkcolor=red!60!black,
		urlcolor=green!40!black,
		filecolor=yellow!50!black,
%frenchlinks=true,
	breaklinks=true,
	pdfpagemode=UseNone,
	bookmarksopen=false,
}

\usepackage{enumitem}

\newtheorem{thm}{Theorem}
\newtheorem{lem}{Lemma}
\newtheorem{prop}{Proposition}
\newtheorem{cor}{Corollary}

\theoremstyle{definition}
\newtheorem{rem}{Remark}

\newcommand{\ensembles}[1]{\mathbb{#1}}
	
	\newcommand{\Z}{\ensembles{Z}}

\newcommand{\ind}[1]{\mathbf{1}_{\{#1\}}}
	\renewcommand{\P}{\ensembles{P}}
	\newcommand{\E}{\ensembles{E}}

\renewcommand{\Pr}[1]{\P\left(#1\right)}
\newcommand{\Prc}[2]{\P\left(#1 \;\middle|\; #2\right)}
\newcommand{\Es}[1]{\E\left[#1\right]}

\newcommand{\ex}{\mathrm{e}}

\newcommand{\q}{\mathbf{q}}
\newcommand{\e}{\overline{\mathfrak{e}}}

\newcommand{\dgr}{d_{\mathrm{gr}}}

\newcommand{\aper}{\mathrm{aper}}
\newcommand{\T}{\mathcal{T}}

\newcommand{\A}{\mathcal{A}}

\newcommand{\Ball}{\mathrm{Ball}}
\newcommand{\hBall}{\overline{\mathrm{Ball}}}

%%% From Peccot

        \newcommand{\map}{\mathfrak{m}}

\newcommand{\rootface}{f_{ \mathrm{r}}}

        \newcommand{\Map}{\mathfrak{M}}
        \newcommand{\Tree}{\mathscr{T}}

\newcommand{\cvdist}[1][n]{\enskip\xrightarrow[#1 \to \infty]{(d)}\enskip}

\DeclareSymbolFont{extraup}{U}{zavm}{m}{n}
\DeclareMathSymbol{\vardspade}{\mathalpha}{extraup}{81}
\DeclareMathSymbol{\varheart}{\mathalpha}{extraup}{86}
\DeclareMathSymbol{\vardiamond}{\mathalpha}{extraup}{87}
\DeclareMathSymbol{\varclub}{\mathalpha}{extraup}{84}

\makeatletter
\renewcommand*{\@fnsymbol}[1]{\ensuremath{\ifcase#1\or  \vardspade \or \vardiamond \or \varheart\or \varclub \or
   \mathsection\or \mathparagraph\or \|\or **\or \dag\dag
   \or \ddagger\ddagger \else\@ctrerr\fi}}
\makeatother

\title{Markovian explorations of random planar maps are roundish}

\author{
	Nicolas \textsc{Curien}\thanks{D\'epartement de Math\'ematiques, Univ. Paris-Sud, Universit\'e Paris-Saclay and IUF.\hfill  \href{mailto:nicolas.curien@gmail.com}{\texttt{nicolas.curien@gmail.com}}} 
\qquad\&\qquad
	Cyril \textsc{Marzouk}
	\thanks{CNRS, IRIF UMR 8243, Universit\'{e} Paris-Diderot, France.\hfill  \href{mailto:cmarzouk@irif.fr}{\texttt{cmarzouk@irif.fr}}}
}

%\date{}

										%%%%%%%%%%%%%%%
										%%%% DOCUMENT %%%%
										%%%%%%%%%%%%%%%
\begin{document}

\maketitle

\begin{abstract}
The infinite discrete stable Boltzmann maps are ``heavy-tailed''  generalisations of the well-known Uniform Infinite Planar Quadrangulation. Very efficient tools to study these objects are Markovian step-by-step explorations of the lattice called peeling processes. Such a process depends on an algorithm which selects at each step the next edge where the exploration continues. We prove here that, whatever this algorithm, a peeling process always reveals about the same portion of the map, thus growing roughly metric balls. Applied to well-designed algorithms, this easily enables us to compare distances in the map and in its dual, as well as to control the so-called pioneer points of the simple random walk, both on the map and on its dual.
\end{abstract}

\section{Introduction}

\paragraph{Peeling process.} Since the introduction of the UIPT (Uniform Infinite Planar Triangulation) by Angel \& Schramm~\cite{Angel-Schramm:UIPT}, the study of the large scale properties of infinite planar maps has been an intensive field of research. One of the main tools to study these random graphs, is the so-called \emph{peeling process} which is a Markovian way to explore these random discrete surfaces step-by-step and connects them with random walks. It has fruitfully been used in the case of the UIPT (or its quadrangular cousin the UIPQ) to study e.g. its geometry~\cite{Angel:Growth_and_percolation_on_the_UIPT, Curien-Le_Gall:Scaling_limits_for_the_peeling_process_on_random_maps}, the behaviour of the simple random walk~\cite{Benjamini-Curien:Simple_random_walk_on_the_uniform_infinite_planar_quadrangulation_subdiffusivity_via_pioneer_points}, its conformal structure~\cite{Curien-A_glimpse_of_the_conformal_structure_of_random_planar_maps} or fine properties of percolation~\cite{Angel:Growth_and_percolation_on_the_UIPT, Angel-Curien:Percolations_on_random_maps_half_plane_models}. The idea of Markovian exploration of random triangulations can be traced back to Watabiki in the physics literature and it was formalised first by Angel~\cite{Angel:Growth_and_percolation_on_the_UIPT}. Later Budd~\cite{Budd:The_peeling_process_of_infinite_Boltzmann_planar_maps} introduced a different and more robust version of it which we will use below.

The main advantage of the peeling process is the flexibility on the choice of the exploration, which depends on an \emph{algorithm}, and the results cited above were obtained using different peeling algorithms. However, certain properties of the peeling process are universal in the sense that they do not depend upon the algorithm: for example the law of the underlying random walk driving the perimeter process, or the fact that any peeling algorithm eventually discovers the complete underlying lattice~\cite[Corollary~6]{Curien-Le_Gall:Scaling_limits_for_the_peeling_process_on_random_maps}. In this work we will show, in a rather strong sense, that all Markovian explorations of the UIPT/UIPQ are bound to discover roughly the same portion of the map at time $n$. In fact, our result applies more generally to infinite (bipartite) ``discrete stable'' Boltzmann maps whose definition we now recall.

\paragraph{Infinite Boltzmann map and the filled-in peeling process.} As usual, all planar maps in this work are rooted, i.e. equipped with a distinguished oriented edge, and as it is customary, we will only consider \emph{bipartite} planar maps (all faces have even degree). Given a non-zero sequence $ \mathbf{q}= (q_{k})_{ k \geq 1}$ of non-negative numbers we define the Boltzmann measure $w$ on the set of all bipartite planar maps by the formula
\[w(\map) \coloneqq \prod_{f\in \mathrm{Faces}(\map)} q_{ \mathrm{deg}(f)/2}.\]
We shall assume that $\q$ is a \emph{critical weight sequence of type} $ a \in (\frac{3}{2}, \frac{5}{2}]$. This means that the equation $z  =1 + \sum_{i \geq 1} \binom{2i-1}{i-1} q_{i} z^i$ has a unique solution $Z_\q > 1$ and that the probability measure 
\[\mu(0)=Z_\q^{-1} \qquad \text{and} \qquad  \mu(k)=Z_\q^{k-1}\binom{2k-1}{k-1}q_{k}.\]
has mean one, and either it has finite variance in the case $a = \frac{5}{2}$, or it is in the strict domain of attraction of an $(a-1)$-stable distribution i.e. $\mu([k,\infty)) \sim c k^{-a+1}$ for some constant $c>0$ as $k \to \infty$, see~\cite{Budd-Curien:Geometry_of_infinite_planar_maps_with_high_degrees,Le_Gall-Miermont:Scaling_limits_of_random_planar_maps_with_large_faces, Borot-Bouttier-Guitter:A_recursive_approach_to_the_O_n_model_on_random_maps_via_nested_loops} and~\cite{Curien-Richier:Duality_of_random_planar_maps_via_percolation} for details. One can then define (using the assumption of criticality only) a random infinite bipartite map  $\Map_\infty$ of the plane as the local weak limit of random maps sampled according to $w(\cdot)$ and conditioned to be large, see~\cite{Bjornberg-Stefansson:Recurrence_of_bipartite_planar_maps,Stephenson:Local_convergence_of_large_critical_multi_type_Galton_Watson_trees_and_applications_to_random_maps}. We will consider so-called \emph{filled-explorations} of $\Map_\infty$ which are sequences of submaps with one hole
\[\e_{0} \subset \cdots \subset \e_{n} \subset \cdots \subset \Map_\infty,\]
obtained by starting with the root-edge $ \e_{0}$ of $\Map_\infty$ and iteratively \emph{peeling} an edge on the boundary of $ \e_{n}$ at each step. If the peeling of an edge creates more than one hole, then we immediately fill-in the finite part (recall that $\Map_\infty$ is one-ended), see Section~\ref{sec:peeling} and~\cite{Curien:Peccot} for details. As recalled above, these explorations depend on an algorithm, hereafter denoted by $\mathcal{A}$ to choose the next edge to peel $ \mathcal{A}( \e_{n})$ on the boundary of the explored part. This algorithm can be deterministic or it may depend on another source of randomness, as long as it is independent of $\Map_\infty$,  and we denote by $(\e\vphantom{e}^\A_n)_{n \geq 0}$ the filled-in  peeling exploration of $ \Map_\infty$ to highlight the dependence in $ \mathcal{A}$. 

The ball of radius $r$ in $ \Map_{\infty}$, denoted by $\Ball( \Map_\infty,r)$, is obtained by keeping the faces of $\Map_\infty$ which have at least one vertex at graph distance smaller than $r$ from the origin $\rho$ of (the root-edge of) the map; its hull $\hBall( \Map_\infty,r)$ is obtained by filling-in all the finite regions of the complement of $\Ball( \Map_\infty,r)$ in $\Map_{\infty}$ (recall that $\Map_{\infty}$ is one-ended). Our main result is then the following, which explains the title:

\begin{thm}%[Explanation of the title of the paper]%[Growth of a peeling process]
\label{thm:volume_peeling}
Fix a critical weight sequence $  \mathbf{q}$ of type $a \in (\frac{3}{2}, \frac{5}{2}]$. For any $ \varepsilon>0$, there exist  $0 < c_{ \varepsilon} < C_{ \varepsilon}< \infty$ such that for any algorithm $ \mathcal{A}$ we have for every $n$ large enough,
\[\hBall\left(\Map_\infty, c_{ \varepsilon} n^{\frac{1}{2(a-1)}}\right)
\subset \e\vphantom{e}^\A_n
\subset \hBall\left(\Map_\infty, C_{ \varepsilon} n^{\frac{1}{2(a-1)}}\right)\]
with probability at least $1 - \varepsilon$.
\end{thm}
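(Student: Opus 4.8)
The plan is to rely on the \emph{universality} of the perimeter and volume of the explored region. By the spatial Markov property of $\Map_\infty$ underlying Budd's peeling (see~\cite{Curien:Peccot}), for every algorithm $\A$ the pair $\big(|\partial\e\vphantom{e}^\A_n|,\,|\e\vphantom{e}^\A_n|\big)_{n\ge 0}$ formed by the perimeter and the number of faces of $\e\vphantom{e}^\A_n$ is a Markov chain whose transition kernel does not depend on $\A$: it is the one attached to the metric (layer-by-layer) exploration $\A_\bullet$, which furthermore satisfies $\e\vphantom{e}^{\A_\bullet}_{\theta_r}=\hBall(\Map_\infty,r)$ at the relevant stopping times $\theta_r$. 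The perimeter coordinate is a random walk conditioned to stay positive with steps in the domain of attraction of an $(a-1)$-stable law, and the volume is controlled jointly with it; so for each $\delta>0$ there is $K_\delta$ such that, \emph{uniformly in $\A$} and for all large $n$, with probability at least $1-\delta$,
\[ K_\delta^{-1}\,n^{\frac1{a-1}}\ \le\ |\partial\e\vphantom{e}^\A_n|\ \le\ K_\delta\,n^{\frac1{a-1}} \qquad\text{and}\qquad K_\delta^{-1}\,n^{\frac{2a-1}{2(a-1)}}\ \le\ |\e\vphantom{e}^\A_n|\ \le\ K_\delta\,n^{\frac{2a-1}{2(a-1)}}. \]
Specialised to $\A_\bullet$ this yields $|\partial\hBall(\Map_\infty,r)|\asymp r^2$, $|\hBall(\Map_\infty,r)|\asymp r^{2a-1}$ and $\theta_r\asymp r^{2(a-1)}$ with high probability. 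Writing $\alpha=\tfrac{1}{2(a-1)}$, I would then reduce the theorem to the two uniform-in-$\A$ statements, each of probability at least $1-\varepsilon/2$ for $n$ large:
\[ \textup{(L)}\quad \dgr\big(\rho,\partial\e\vphantom{e}^\A_n\big)\ \ge\ c_\varepsilon\,n^{\alpha}, \qquad\qquad \textup{(U)}\quad \max_{v\in\e\vphantom{e}^\A_n}\dgr(\rho,v)\ \le\ C_\varepsilon\,n^{\alpha}. \]

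For the upper inclusion \textup{(U)} — the exploration cannot stray too far — I would observe that on the good event above $\e\vphantom{e}^\A_k$ is, for every $k\le n$, the region surrounded by a cycle of length $O(n^{1/(a-1)})$, and that each filling-in operation absorbs only a region bounded by a cycle of length at most $|\partial\e\vphantom{e}^\A_k|+O(1)$; one then invokes the geometric estimate for $\Map_\infty$ — a byproduct of the peeling analysis, cf.~\cite{Curien-Le_Gall:Scaling_limits_for_the_peeling_process_on_random_maps} — that a finite region surrounding $\rho$ and enclosed by a cycle of length $\ell$ has diameter $O(\ell^{1/2})$ with probability tending to $1$, together with a union bound over $k\le n$ and over the relevant cycle lengths. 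Taking $\ell\asymp n^{1/(a-1)}$ gives $\diam(\e\vphantom{e}^\A_n)=O(n^{\alpha})$, hence \textup{(U)}.

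The lower inclusion \textup{(L)} is the step I expect to be the main obstacle: no algorithm, however perversely it defers peeling near $\rho$, can keep a macroscopic neighbourhood of $\rho$ undiscovered after $n$ steps. A violation of \textup{(L)} forces $\e\vphantom{e}^\A_n$ to contain a thin ``fjord'' — an undiscovered corridor joining infinity to within distance $c_\varepsilon n^{\alpha}$ of $\rho$, along which the boundary $\partial\e\vphantom{e}^\A_n$ draws close to $\rho$ — although by the volume bound above, for $c_\varepsilon$ small, $\e\vphantom{e}^\A_n$ contains far more faces than $\hBall(\Map_\infty,c_\varepsilon n^{\alpha})$ does. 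The idea is that the dynamics rule out such fjords: the universal perimeter process, a conditioned heavy-tailed walk, performs with probability at least $1-\varepsilon$ within a time window of length $\asymp n$ a downward jump of size comparable to a fixed small multiple of its current running maximum — that is, a filling-in event absorbing a region of perimeter $\asymp c_\varepsilon^2\,n^{1/(a-1)}$ — and with probability bounded away from $0$ this absorbed region is the one separating $\rho$ from infinity, so that it swallows $\hBall(\Map_\infty,c_\varepsilon n^{\alpha})$; iterating over the many nearly independent such opportunities boosts the probability to $1-\varepsilon/2$. The argument must be made uniform in $\A$, which is possible because the occurrence, the size, and the law of the region absorbed at a filling-in step are governed by the universal kernel and do not depend on which boundary edge $\A$ peels; and it genuinely uses that $\Map_\infty$ is one-ended — so each pinch yields a bona fide finite region — together with the fact~\cite[Corollary~6]{Curien-Le_Gall:Scaling_limits_for_the_peeling_process_on_random_maps} that any peeling eventually exhausts the lattice, which anchors the comparison with $\A_\bullet$. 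A union bound over \textup{(L)} and \textup{(U)} with $\delta=\varepsilon/2$ then completes the proof.
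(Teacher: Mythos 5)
Your high-level plan — split the theorem into a lower inclusion (L) and an upper inclusion (U), reduce each to an estimate on the extremal distances from $\rho$ to $\partial\e\vphantom{e}^\A_n$, and exploit the universality (in $\A$) of the law of $(|\partial\e\vphantom{e}^\A_n|,|\e\vphantom{e}^\A_n|)$ — is exactly the skeleton of the paper's proof, and your estimates on the perimeter/volume process and on $|\hBall(\Map_\infty,r)|$ match Propositions~\ref{prop:volume_peeling} and~\ref{prop:volume_boules}. For (U) you are also in the right ballpark: the paper shows $D_n^+-D_n^-\lesssim\aper(\Map_\infty\setminus\e_n)\lesssim|\partial\e_n|^{1/2}$ via the spatial Markov property (the unexplored part is a $\Map_\infty^{(p)}$ and Proposition~\ref{prop:aperture} gives $\aper\approx p^{1/2}$), and then combines this with the cheap volume bound $D_n^-\lesssim n^{1/(2(a-1))}$; your ``cycle of length $\ell$ has diameter $O(\ell^{1/2})$'' is a less precise version of the same idea, and your union bound over $k\le n$ is unnecessary — only the time $n$ matters once you bound the aperture of the unexplored region.

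The genuine gap is in your argument for (L), and you correctly flag it as the main obstacle. You claim that a filling-in step of macroscopic size ``absorbs a region separating $\rho$ from infinity, so that it swallows $\hBall(\Map_\infty, c_\varepsilon n^{\alpha})$.'' This cannot be right as stated: the region absorbed at a filling-in step lies in $\Map_\infty\setminus\e_n$, it never contains $\rho$, and it certainly cannot contain a hull which is already inside $\e_n$. What a big jump \emph{can} do, if the boundary $\partial\e_n$ approaches within distance $r$ of $\rho$ and $|\partial\e_n|\gg r^2$, is (with probability at least $1/2$) swallow the nearest boundary vertex $x_n^-$; the newly glued-in Boltzmann map $\Map^{(p)}$ is re-rooted there, and Proposition~\ref{prop:volume_boules_cartes_bord} ensures that its ball of radius $r$ carries volume $\gtrsim r^{2a-1}$, all of which lands inside $\Ball(\Map_\infty,2r)$. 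This is precisely Lemma~\ref{lem:petite_masse}. The conclusion is then not that $D_n^-$ is pushed up by one such jump — an adversarial algorithm could keep peeling back towards $\rho$ — but rather that, because $\sum_{i\le n}|\partial\e_i|^{-(a-1)}$ diverges (Jeulin's lemma applied to the limiting stable process conditioned to stay positive), many such jumps occur and their combined volume overfills $\Ball(\Map_\infty,2r)$, contradicting Proposition~\ref{prop:volume_boules}. In short: the correct mechanism is \emph{accumulation of volume near the origin}, not a single swallowing event; your iteration paragraph does not supply this because the individual events you iterate over are not the right ones. You would also need the monotonicity $D_i^-\le D_n^-$ for $i\le n$ (which holds) to convert ``$D_n^-\le r_n$'' into the statement that \emph{every} step up to $n$ satisfied the hypothesis of the lemma.
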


Our main result thus shows, in a sense, that Markovian explorations of $ \Map_\infty$ are bound to discover roughly the same region of $ \Map_\infty$ by time $n$. In particular this implies that any Markovian exploration will eventually discover the full map, a fact already proved in~\cite[Corollary~27]{Curien:Peccot}. In the other direction, the paper~\cite{Curien-Marzouk:How_fast_planar_maps_get_swallowed_by_a_peeling_process} studies the decay of the (small) probability that a given edge remains exposed on the boundary of $ \e_{n}$ for large $n$'s. 

\begin{rem} There is little doubt that our results also hold in the case of non-bipartite maps but we restrict to the case of bipartite maps for technical convenience. In the particular case of the UIPT (type I) our geometric estimates on maps (Proposition~\ref{prop:volume_boules}) can be derived from~\cite{Budzinski:The_hyperbolic_Brownian_plane} and Proposition~\ref{prop:volume_boules_cartes_bord} and~\ref{prop:aperture} on maps with a boundary may be obtained using similar techniques, with~\cite{Miermont:Invariance_principles_for_spatial_multitype_Galton_Watson_trees}. \end{rem} 

Let us now derive corollaries of our main theorem by specifying it to well-chosen peeling algorithms.

\paragraph{Dual graph distance.} There is an algorithm $ \mathcal{L}_{ \mathrm{dual}}$ which explores the hull of the balls for the \emph{dual} metric on $\Map_\infty$ (i.e. the graph distance on the dual graph $\Map_\infty^{\dag}$) whose details can be found in Section~\ref{sec:corollaires}. Since faces of a map correspond to vertices of its dual, in order to compare these two lattices, let us view $\Ball(\Map_\infty^\dag, r)$ as the subset of vertices of $\Map_\infty$ which are incident to a face at dual graph distance from the root-face (the one to the right of the root-edge) less than $r$, then let $\hBall(\Map_\infty^\dag, r)$ be the set of all these vertices to which we add all the finite regions of the complement.

When $a \in (2, \frac{5}{2}]$, the so-called ``dilute phase'', balls in the dual graph grow polynomially in the radius~\cite{Budd-Curien:Geometry_of_infinite_planar_maps_with_high_degrees}; combined with the above result, this yields the following rough comparison between primal and dual distances in $\Map_\infty$: the hull of the ball of radius $r$ in $\Map_\infty^\dag$ is close to the hull of the ball of radius $r^{1/(2a-4)}$ in $\Map_\infty$. For the so-called ``dense phase'' $a \in (\frac{3}{2}, 2)$, the balls in the dual graph grow exponentially in the radius~\cite{Budd-Curien:Geometry_of_infinite_planar_maps_with_high_degrees}, whilst in the intermediate regime $a=2$, they have an ``intermediate growth'', exponential in the \emph{square-root} of the radius~\cite{Budd-Curien-Marzouk:Infinite_random_planar_maps_related_to_Cauchy_processes}, so now the hull of the dual ball of radius $r$ is close to the hull of a primal ball with radius of order $\ex^r$ when $a < 2 $ and $\ex^{\sqrt{r}}$ for $a=2$.

\begin{cor}\label{cor:volume_primal_dual}
Fix a critical weight sequence $\q$ of type $a \in (\frac{3}{2}, \frac{5}{2}]$; there exists $c_a > 0$ such that the following holds: For every $ \varepsilon>0$, there exist $0 < c_{ \varepsilon} < C_{ \varepsilon}< \infty$ such that for every $r$ large enough, we have 
\[\begin{array}{rccclcl}
\hBall(\Map_\infty, c_\varepsilon r^{\frac{1}{2a-4}}) &\subset& \hBall(\Map_\infty^\dag, r) &\subset& \hBall(\Map_\infty, C_\varepsilon r^{\frac{1}{2a-4}})
&\mbox{when}& a \in (2; 5/2],
\\
\hBall(\Map_\infty, \ex^{\pi \sqrt{r/2} (1-\varepsilon)}) &\subset& \hBall(\Map_\infty^\dag, r) &\subset& \hBall(\Map_\infty, \ex^{\pi \sqrt{r/2} (1+\varepsilon)})
&\mbox{when} &a = 2,
\\
\hBall(\Map_\infty, \ex^{c_a (1-\varepsilon) r}) &\subset& \hBall(\Map_\infty^\dag, r) &\subset& \hBall(\Map_\infty, \ex^{c_a  (1+\varepsilon) r})
&\mbox{when}& a \in (3/2; 2),
\end{array}\]
with probability at least $1 - \varepsilon$.
\end{cor}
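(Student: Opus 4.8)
The plan is to apply Theorem~\ref{thm:volume_peeling} to the algorithm $\mathcal{A} = \mathcal{L}_{\mathrm{dual}}$ and to translate the number of peeling steps into a dual radius. Write $\beta := \frac{1}{2(a-1)}$ for the exponent appearing in Theorem~\ref{thm:volume_peeling}, and recall from Section~\ref{sec:corollaires} that the filled-in peeling exploration $(\e\vphantom{e}^{\mathcal{L}_{\mathrm{dual}}}_{n})_{n \geq 0}$ driven by $\mathcal{L}_{\mathrm{dual}}$ uncovers the hulls of the dual balls one after another: there is an increasing sequence of (random) times $(\theta_r)_{r \geq 1}$ with $\e\vphantom{e}^{\mathcal{L}_{\mathrm{dual}}}_{\theta_r} = \hBall(\Map_\infty^\dag, r)$ for every $r$, so that $\theta_r$ is the number of peeling steps needed to reveal $\hBall(\Map_\infty^\dag, r)$. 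The growth of the dual balls recalled before the statement --- from \cite{Budd-Curien:Geometry_of_infinite_planar_maps_with_high_degrees} in the dilute ($a \in (2, 5/2]$) and dense ($a \in (3/2, 2)$) phases, and from \cite{Budd-Curien-Marzouk:Infinite_random_planar_maps_related_to_Cauchy_processes} when $a = 2$ --- is precisely a statement about $\theta_r$: in the dilute phase $\theta_r$ is of polynomial order $r^{(a-1)/(a-2)}$, while $\frac{1}{\sqrt r}\log\theta_r \to \pi\sqrt{2}$ in probability when $a = 2$, and $\frac{1}{r}\log\theta_r \to \gamma_a$ in probability for some constant $\gamma_a > 0$ when $a \in (3/2, 2)$. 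Set $c_a := \gamma_a/(2(a-1))$ (the constant appearing in the statement) and
\[\psi_a(r) := r^{\frac{1}{2a-4}} \quad (a \in (2, 5/2]), \qquad \psi_a(r) := \ex^{\pi\sqrt{r/2}} \quad (a = 2), \qquad \psi_a(r) := \ex^{c_a r} \quad (a \in (3/2, 2)).\]
A routine computation of exponents turns these asymptotics into the statements that $\theta_r^{\beta}/\psi_a(r)$ is bounded and bounded away from $0$ in probability in the dilute phase, and that $\log(\theta_r^{\beta})/\log(\psi_a(r)) \to 1$ in probability in the other two phases. In particular, for every $\eta > 0$ there are deterministic integers $n_1(r) \leq n_2(r)$ with $n_1(r) \to \infty$ such that
\[\Pr{n_1(r) \leq \theta_r \leq n_2(r)} \geq 1 - \eta \qquad \text{for every $r$ large enough,}\]
and such that $n_1(r)^{\beta} \geq \delta_\eta\, \psi_a(r)$ and $n_2(r)^{\beta} \leq \Delta_\eta\, \psi_a(r)$ for fixed $0 < \delta_\eta \leq \Delta_\eta$ in the dilute phase, while $n_1(r)^{\beta} \geq \psi_a(r)^{1-\eta}$ and $n_2(r)^{\beta} \leq \psi_a(r)^{1+\eta}$ in the other two phases.

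The next step is to get rid of the randomness of $\theta_r$ by a monotonicity argument. Both $n \mapsto \e\vphantom{e}^{\mathcal{L}_{\mathrm{dual}}}_{n}$ and $\rho \mapsto \hBall(\Map_\infty, \rho)$ are nondecreasing (a peeling step only adds faces, and enlarging the radius only adds vertices, faces and finite regions). Fix $\varepsilon > 0$, apply the previous display with $\eta = \varepsilon/3$, and apply Theorem~\ref{thm:volume_peeling} to $\mathcal{L}_{\mathrm{dual}}$ with parameter $\varepsilon/3$ at the two deterministic times $n_1(r)$ and $n_2(r)$ --- legitimate once $r$ is large enough that $n_1(r)$ exceeds the threshold $N_0(\varepsilon/3)$ of that theorem: with probability at least $1 - \varepsilon/3$ each,
\[\hBall(\Map_\infty, c_{\varepsilon/3}\, n_1(r)^{\beta}) \subset \e\vphantom{e}^{\mathcal{L}_{\mathrm{dual}}}_{n_1(r)} \qquad \text{and} \qquad \e\vphantom{e}^{\mathcal{L}_{\mathrm{dual}}}_{n_2(r)} \subset \hBall(\Map_\infty, C_{\varepsilon/3}\, n_2(r)^{\beta}).\]
On the intersection of these three events, of probability at least $1 - \varepsilon$, the inclusions $\e\vphantom{e}^{\mathcal{L}_{\mathrm{dual}}}_{n_1(r)} \subset \e\vphantom{e}^{\mathcal{L}_{\mathrm{dual}}}_{\theta_r} \subset \e\vphantom{e}^{\mathcal{L}_{\mathrm{dual}}}_{n_2(r)}$ (valid because $n_1(r) \leq \theta_r \leq n_2(r)$) together with $\e\vphantom{e}^{\mathcal{L}_{\mathrm{dual}}}_{\theta_r} = \hBall(\Map_\infty^\dag, r)$ give
\[\hBall(\Map_\infty, c_{\varepsilon/3}\, n_1(r)^{\beta}) \subset \hBall(\Map_\infty^\dag, r) \subset \hBall(\Map_\infty, C_{\varepsilon/3}\, n_2(r)^{\beta}).\]

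It then only remains to absorb the constants and to replace $n_i(r)^{\beta}$ by $\psi_a(r)$, using once more the monotonicity of $\hBall(\Map_\infty, \cdot)$ in the radius. In the dilute phase, $c_{\varepsilon/3}\, n_1(r)^{\beta} \geq c_{\varepsilon/3}\delta_{\varepsilon/3}\, r^{1/(2a-4)}$ and $C_{\varepsilon/3}\, n_2(r)^{\beta} \leq C_{\varepsilon/3}\Delta_{\varepsilon/3}\, r^{1/(2a-4)}$, so setting $c_\varepsilon := c_{\varepsilon/3}\delta_{\varepsilon/3}$ and $C_\varepsilon := C_{\varepsilon/3}\Delta_{\varepsilon/3}$ gives the first line of the statement. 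When $a = 2$ (resp. $a \in (3/2, 2)$), write $L(r) := \pi\sqrt{r/2}$ (resp. $L(r) := c_a r$), so that $\psi_a(r) = \ex^{L(r)}$ and $L(r) \to \infty$; then $\log(c_{\varepsilon/3}\, n_1(r)^{\beta}) \geq (1 - \frac{\varepsilon}{3})L(r) + \log c_{\varepsilon/3} \geq (1 - \varepsilon) L(r)$ for $r$ large (because $\varepsilon/3 < \varepsilon$ and $L(r) \to \infty$), and symmetrically $\log(C_{\varepsilon/3}\, n_2(r)^{\beta}) \leq (1 + \varepsilon) L(r)$, whence $\hBall(\Map_\infty, \ex^{(1-\varepsilon)L(r)}) \subset \hBall(\Map_\infty^\dag, r) \subset \hBall(\Map_\infty, \ex^{(1+\varepsilon)L(r)})$, which is the second (resp. third) line of the statement.

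The only genuinely delicate point above is the substitution of the \emph{random} time $\theta_r$ into Theorem~\ref{thm:volume_peeling}, which is handled by sandwiching $\theta_r$ between deterministic times and using the two monotonicities; everything else is bookkeeping of constants. The real content of the corollary sits in the external inputs describing the growth of the dual balls --- polynomial, stretched-exponential $\ex^{\Theta(\sqrt r)}$, or exponential $\ex^{\Theta(r)}$ --- with in particular the sharp constant $\pi\sqrt2$ when $a = 2$ and the existence of the exponential rate $\gamma_a$ in the dense phase, provided by \cite{Budd-Curien:Geometry_of_infinite_planar_maps_with_high_degrees, Budd-Curien-Marzouk:Infinite_random_planar_maps_related_to_Cauchy_processes}.
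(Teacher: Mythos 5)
Your proof is correct and takes essentially the same route as the paper: apply Theorem~\ref{thm:volume_peeling} to $\mathcal{L}_{\mathrm{dual}}$, identify $\e\vphantom{e}^{\mathcal{L}_{\mathrm{dual}}}_n$ with a dual hull, and feed in the growth rates of dual balls from~\cite{Budd-Curien:Geometry_of_infinite_planar_maps_with_high_degrees,Budd-Curien-Marzouk:Infinite_random_planar_maps_related_to_Cauchy_processes}. The only (cosmetic) difference is that you parametrize by the dual radius $r$ via the stopping times $\theta_r$ rather than tracking the dual height $H_n$ at step $n$ as the paper does; your sandwiching of $\theta_r$ between deterministic times in fact spells out the ``monotonicity properties'' that the paper's proof invokes implicitly in its last sentence.
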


Observe that $2a-4=1$ when $a = 5/2$; in the case of triangulations, it is known more precisely that the distances on the primal and dual are in fact asymptotically proportional~\cite{Curien-Le_Gall:First_passage_percolation_and_local_modifications_of_distances_in_random_triangulations}.

\paragraph{Pioneer points for simple random walk.} 
In another direction we study the behaviour of the simple random walk on $\Map_\infty$ and $\Map_\infty^{\dag}$ using algorithms $ \mathcal{S}_{ \mathrm{primal}}$ and $ \mathcal{S}_{ \mathrm{dual}}$ respectively which both explore the map $ \Map_\infty$ along the corresponding walk. These algorithms enable us to keep track of the so-called \emph{pioneer points} of the walk, which are roughly speaking steps performed by the walk which yield to the discovery of a new vertex which is not disconnected from infinity when removing the past trajectory (see Section~\ref{sec:pionniers} for details). Our theorem shows that the respective walk performs about
\[\begin{array}{cll}
r^{2a-2} &\quad\text{pioneer steps within } \hBall( \Map_\infty,r)&\quad   \text{(primal)},
\\
g_a(r) &\quad\text{pioneer steps within } \hBall(\Map_\infty^\dag, r)&\quad   \text{(dual)},
\end{array}\]
with high probability, where $g_a(r) = r^{\frac{a-1}{a-2}} \cdot \ind{a > 2} +  \sqrt{\log r}\cdot \ind{a=2} + \log r\cdot \ind{a < 2}$. We refer to Corollary~\ref{cor:points_pionniers_sous_diff} and~\ref{cor:points_pionniers_sous_diff_dual} for more precise statements.

For the walk on the primal map $\Map_\infty$, in the dilute regime $a > 2$, this in particular establishes a \emph{sub-diffusivity} phenomenon in the sense that with high probability, the $n$-th step of the walk is at distance at most $n^{1/(2a-2)}$ from its starting point. This idea to use the pioneer points to derive a sub-diffusive behaviour was exploited in~\cite{Benjamini-Curien:Simple_random_walk_on_the_uniform_infinite_planar_quadrangulation_subdiffusivity_via_pioneer_points} on the UIPQ and considers in some sense the worst case where each step of the random walk is a pioneer point. It is likely that this is far from what really occurs and controlling this would improve the exponent (see~\cite{Curien-Marzouk:How_fast_planar_maps_get_swallowed_by_a_peeling_process} for an argument based on reversibility which improves a tiny bit the exponent in the case of the UIPT/Q). Let us mention that in a forthcoming paper~\cite{Curien-Marzouk:Sous_diff_Boltzmann}, we use a completely different method to prove that the walk on the primal map is actually always sub-diffusive with exponent at most $1/3$, for all $a \in (\frac{3}{2}, \frac{5}{2}]$.

One could apply our main result to many others peeling algorithms such as the uniform peeling (or metric exploration for the Eden model), peeling along percolation interfaces or peeling associated with internal DLA. We refrain from doing so to keep the paper short.\medskip

Throughout this work, for two positive random processes $(X_n)_{n \ge 0}$ and $(Y_n)_{n \ge 0}$, we write $X_n \lesssim Y_n$, resp. $X_n \gtrsim Y_n$, when
\[\lim_{C \to \infty} \limsup_{n \to \infty} \Pr{X_n > C Y_n} = 0,
\qquad\text{resp.}\qquad
\lim_{C \to \infty} \limsup_{n \to \infty} \Pr{X_n < C^{-1} Y_n} = 0.\]
We also write $X_n \approx Y_n$ when both $X_n \lesssim Y_n$ and $X_n \gtrsim Y_n$ hold. This notion of comparison is different from the one used in~\cite{Benjamini-Curien:Simple_random_walk_on_the_uniform_infinite_planar_quadrangulation_subdiffusivity_via_pioneer_points}, where these symbols have the following different meaning: there $X_n \lesssim Y_n$ means that there exists $\kappa > 0$ such that $X_n / (Y_n \log^\kappa n)$ converges almost surely to $0$. This notion is neither weaker nor stronger than the present one (it is a trade-off between a strong convergence and logarithmic factors instead of constants). We believe that all our results also hold in this sense but our current estimates do not imply it.

\paragraph{Acknowledgments} We acknowledge support from the Fondation Math\'{e}matique Jacques Hadamard, the grants \texttt{ERC-2016-STG 716083} ``CombiTop'' and \texttt{ERC 740943} ``GeoBrown'', as well as the grant \texttt{ANR-14-CE25-0014} ``ANR GRAAL''.

\section{Peeling process and geometric estimates}
In this section, we briefly recall the filled-in peeling process of $\Map_{\infty}$ and refer the reader to~\cite{Curien:Peccot} for details. We also gather  the geometric estimates needed for the proof of our main result, which is then rather short and simple. The proofs of these estimates, which are based on constructions with labelled trees, are postponed to Section~\ref{proof:technics}.

\subsection{Filled-in peeling process}\label{sec:peeling}
Given an instance of $\Map_{\infty}$, a filled-in peeling process is a sequence of growing submaps $ \e_{0} \subset \e_{1} \subset \cdots \subset \Map_{\infty}$ where $ \e_{0} $ is the root-edge of $\Map_{\infty}$ and $ \e_{n+1}$ is obtained by the peeling of one edge on the boundary of $ \e_{n}$. More precisely, $ \e_{n}$ is a planar (bipartite) map with a hole, i.e. a distinguished face whose boundary is simple. We say that $ \e_{n}$ is a submap of $ \Map_{\infty}$ in the sense that $\Map_{\infty}$ can be recovered by gluing a proper map with (general) boundary inside the unique hole of $\e_{n}$. To pass on from $ \e_{n}$ to $ \e_{n+1}$ we first select an edge $ \mathcal{A}( \e_{n})$ on the boundary of the hole of $ \e_{n}$, where $ \mathcal{A}$ is our peeling algorithm which may depend on another source of randomness as long as it is independent of $ \Map_{\infty}$. Once $ \mathcal{A}( \e_{n} )$ is picked, we reveal its status inside the map $ \Map_{\infty}$, two cases may appear, as illustrated in Figure~\ref{fig:peeling}:
\begin{itemize} 
\item Either the peel edge is incident to a new face in $ \Map_{\infty}$ of degree $2k$, then $ \e_{n+1}$ is obtained from $ \e _{n}$ by gluing this face on the peel edge without performing any other identification. This event is called event of type $ \mathsf{C}_{k}$.
\item Or the peel edge is incident to another face of $ \e_{n}$ in the map $\Map_{\infty}$, in which case we perform the identification of the two boundary edges of $\e_n$. When doing so, the hole  of $ \e_{n}$ of perimeter, say $2p$, is split into two holes of perimeter $2p_{1}$ and $2 p_{2}$ with $p_{1}+p_{2}=p-1$. Since $\Map_{\infty}$ is one-ended almost surely, only one of these holes corresponds to an infinite region in $\Map_{\infty}$. We then fill-in the finite hole with the corresponding map inside $ \Map_{\infty}$ to obtain $  \e_{n+1}$. We speak of event of type $ \mathsf{G}_{*,p_{1}}$ or $ \mathsf{G}_{p_{2}, *}$ depending whether the finite hole is on the left or on the right of the peel edge.
\end{itemize}

\begin{figure}[!ht]\centering
\includegraphics[width=.85\linewidth]{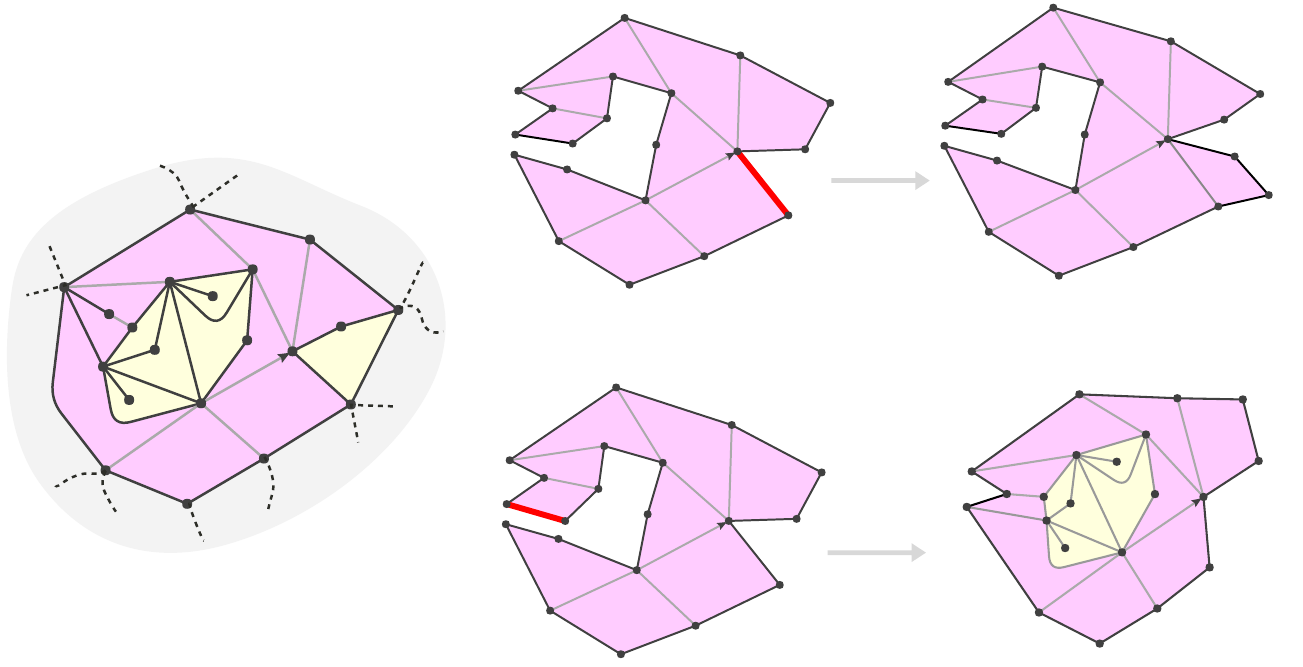}
\caption{Illustration of the filled-in peeling process. In the left-most figure we have explored a certain region $ \e_{n} \subset \Map_{\infty}$ corresponding to the faces in pink glued by the edges in gray. Depending on the edge to peel at the next step we may end-up either with an event of type $ \mathsf{C}_{2}$ (top figures), or an event of type $ \mathsf{G}_{3, *}$ (bottom figures).}
\label{fig:peeling}
\end{figure}
 
During a filled-in peeling exploration $(\e_{n})_{n \geq 0}$ of $\Map_{\infty}$ we denote by $ |\partial \e_{n}|$ the half-perimeter of the boundary of the unique hole of $ \e_{n}$ and by $| \e_{n}|$ the number of inner vertices. The process $ ( | \partial \e_{n}|, | \e_{n}|)_{n \geq 0}$ is a Markov chain whose law is independent of the peeling algorithm with explicit probability transitions~\cite{Budd:The_peeling_process_of_infinite_Boltzmann_planar_maps}. In particular we recall that $(| \partial \e_{n}|)_{n \geq 0}$ is a Doob $h$-transform of a random walk with i.id. increments of law $\nu$ for the function $h^{\uparrow}(p)= 2p\cdot 2^{-2p} \binom{2 p}{p}$ for $p \geq0$ and where $\nu$ satisfies 
\[\nu(-k) \sim  \mathsf{p}_{ \mathbf{q}} \cdot k^{-a}
\qquad\text{and}\qquad
\nu([k, \infty))= \frac{ \mathsf{p}_{ \mathbf{q}}}{a-1}  \cos( a \pi) k^{1-a},\]
where the constant $ \mathsf{p}_{ \mathbf{q}}>0$ depends on our weight sequence. Precise scaling limits for the process $( | \partial \e_{n}|, | \e_{n}|)_{n \geq 0}$ are known  (\cite[Theorem~3.6]{Budd-Curien:Geometry_of_infinite_planar_maps_with_high_degrees} in the case $a\ne 2$\footnote{The case $a=5/2$ is not considered there but the arguments extend readily.} and~\cite[Theorem~1]{Budd-Curien-Marzouk:Infinite_random_planar_maps_related_to_Cauchy_processes} in the case $a= 2$) and in particular it follows that

\begin{prop}[Peeling growth]
\label{prop:volume_peeling}
Let $(\e_n)_{n \ge 0}$ be a filled-in peeling process of $\Map_\infty$. Then we have
\[|\partial \e_n| \approx n^{\frac{1}{a-1}}
\qquad\text{and}\qquad
|\e_n| \approx n^{\frac{a-1/2}{a-1}}.\]
\end{prop}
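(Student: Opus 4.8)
The plan is to deduce the proposition from the functional scaling limits for the pair $(|\partial \e_n|, |\e_n|)_{n \ge 0}$ that are already available in the literature, and then simply to read off the one-dimensional marginals at a fixed time. Precisely, by~\cite[Theorem~3.6]{Budd-Curien:Geometry_of_infinite_planar_maps_with_high_degrees} when $a \neq 2$ (the case $a = 5/2$ being covered by the same argument, as noted above) and by~\cite[Theorem~1]{Budd-Curien-Marzouk:Infinite_random_planar_maps_related_to_Cauchy_processes} when $a = 2$, one has the joint convergence in distribution for the Skorokhod topology on $D(\R_+, \R_+^2)$,
\[\left( \frac{|\partial \e_{\lfloor nt \rfloor}|}{n^{1/(a-1)}} , \frac{|\e_{\lfloor nt \rfloor}|}{n^{(a-1/2)/(a-1)}} \right)_{t \ge 0} \cvdist \left( \mathcal{P}_t, \mathcal{V}_t \right)_{t \ge 0},\]
where $(\mathcal{P}_t)_{t \ge 0}$ is a constant multiple of an $(a-1)$-stable L\'evy process conditioned to stay positive and $(\mathcal{V}_t)_{t \ge 0}$ is a related explicit process; in particular the limit has no fixed discontinuity and satisfies $\mathcal{P}_t, \mathcal{V}_t \in (0, \infty)$ almost surely for every fixed $t > 0$.

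The first step is then to specialise this to the deterministic time $t = 1$: since the limit process is almost surely continuous at $t=1$, the Skorokhod convergence above implies convergence in distribution of the time-$1$ marginals, i.e.\ $|\partial \e_n| / n^{1/(a-1)} \cvdist \mathcal{P}_1$ and $|\e_n| / n^{(a-1/2)/(a-1)} \cvdist \mathcal{V}_1$, with $\mathcal{P}_1, \mathcal{V}_1 \in (0,\infty)$ a.s. It then remains to invoke the elementary fact that if $(X_n)$ are nonnegative random variables and $(Y_n)$ a deterministic positive sequence such that $X_n / Y_n$ converges in distribution to some $Z$ with $Z \in (0,\infty)$ a.s., then $X_n \approx Y_n$ in the sense of the introduction: indeed, for $C$ in the dense set of continuity points of the law of $Z$ one has $\limsup_n \Pr{X_n > C Y_n} = \Pr{Z > C}$ and $\limsup_n \Pr{X_n < C^{-1} Y_n} = \Pr{Z \le C^{-1}}$, and both tend to $0$ as $C \to \infty$ since $Z \in (0,\infty)$ a.s. Applying this to $(|\partial \e_n|, n^{1/(a-1)})$ and to $(|\e_n|, n^{(a-1/2)/(a-1)})$ gives the two assertions.

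In this presentation essentially all of the content is imported from the cited scaling limits, so the remaining work is light; let me indicate where the difficulty really lies. The perimeter part is easy even from scratch: $(|\partial \e_n|)_{n \ge 0}$ is the Doob $h^\uparrow$-transform of a random walk with step distribution $\nu$, which lies in the domain of attraction of an $(a-1)$-stable law with norming sequence of order $n^{1/(a-1)}$, so the standard invariance principle for random walks conditioned to stay positive directly gives $|\partial \e_n| / n^{1/(a-1)} \cvdist \mathcal{P}_1 \in (0,\infty)$. The genuinely delicate ingredient, and the step I would single out as the main obstacle for a direct proof, is the volume process $(|\e_n|)_{n \ge 0}$: its increments at the successive $\mathsf{G}$-events are the (heavy-tailed) numbers of vertices of the finite filled-in Boltzmann maps with a boundary that get swallowed, so controlling $|\e_n|$ requires sharp first-moment and tail estimates for the volume of a Boltzmann map with a boundary as a function of its perimeter — precisely the kind of estimates collected in Section~\ref{proof:technics} — together with control of the perimeter at those swallowing times. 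This is why the cleanest route is to quote the already-established functional limit theorems and extract their time-$1$ marginals.
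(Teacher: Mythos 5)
Your proof is correct and takes essentially the same route as the paper: the authors likewise simply invoke the functional scaling limits of~\cite[Theorem~3.6]{Budd-Curien:Geometry_of_infinite_planar_maps_with_high_degrees} (for $a\neq 2$, and $a=5/2$ by the same argument) and~\cite[Theorem~1]{Budd-Curien-Marzouk:Infinite_random_planar_maps_related_to_Cauchy_processes} (for $a=2$) and state that the proposition ``follows''; you have merely made explicit the (standard) extraction of the fixed-time marginal and the deduction of $\approx$ from convergence in distribution to a strictly positive finite limit.
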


\subsection{Geometric estimates}
\label{sec:estimees_intermediaires}
We now recall a few geometric estimates that we will use during the proof of our main result. Although some of these estimates may be obtained using the peeling process, we find it more convenient to prove them using Schaeffer-type construction of  $ \Map_{\infty}$~\cite{Bjornberg-Stefansson:Recurrence_of_bipartite_planar_maps,Stephenson:Local_convergence_of_large_critical_multi_type_Galton_Watson_trees_and_applications_to_random_maps}. We postpone the proof of these estimates to Section~\ref{proof:technics}. Recall that $|\map|$ denotes the number of vertices of a map $\map$ and $\rho$ is the origin vertex of $\Map_{\infty}$.

\begin{prop}[Volume growth and tentacles]
\label{prop:volume_boules}
We have
\[|\Ball(\Map_\infty, r)| \approx r^{2a-1},
\quad
|\hBall(\Map_\infty, r)| \approx r^{2a-1},
\quad\text{and}\quad \max\{\dgr(\rho, u); u \in \hBall(\Map_\infty, r)\} \approx r. \]
\end{prop}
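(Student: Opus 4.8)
I would read all three estimates off the Bouttier--Di Francesco--Guitter encoding of $\Map_\infty$ by an infinite labelled mobile $(\tau_\infty,\ell)$, provided by~\cite{Bjornberg-Stefansson:Recurrence_of_bipartite_planar_maps,Stephenson:Local_convergence_of_large_critical_multi_type_Galton_Watson_trees_and_applications_to_random_maps}: here $\tau_\infty$ is the critical multitype Galton--Watson tree attached to $\q$ \emph{conditioned to survive}, so it has a single infinite spine $(s_0=\rho,s_1,s_2,\dots)$ off which hang independent finite Galton--Watson bushes, and $\ell\colon\tau_\infty\to\Z$ is an admissible label function with $\ell(\rho)=0$, the vertices of $\Map_\infty$ being encoded by those vertices of $\tau_\infty$ of one of the two types. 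Two classical facts will do the translation. First, the Schaeffer-type distance inequalities: writing $\Lambda_v$ for the minimum of $\ell$ along the ancestral line of a vertex $v$, one has the ``cactus'' lower bound $\dgr(\rho,v)\ge \ell(v)-2\Lambda_v$ together with a near-matching contour upper bound $\dgr(\rho,v)\le \ell(v)-2\Lambda_v+O(1)$, the latter valid up to an additive error controlled by the label oscillation of the bush containing $v$. The point of the infinite-volume encoding is precisely that $\Lambda_h:=\min_{i\le h}\ell(s_i)$ tends to $-\infty$, so that a vertex $v$ sitting above the piece $s_0,\dots,s_h$ of the spine is far from $\rho$ exactly when $\ell(v)$ is large compared with $\Lambda_h$. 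Second, I would import the scaling and tail estimates for labelled multitype Galton--Watson trees of~\cite{Le_Gall-Miermont:Scaling_limits_of_random_planar_maps_with_large_faces,Miermont:Invariance_principles_for_spatial_multitype_Galton_Watson_trees} --- fluctuations of the spine-label walk, tails of the bush volumes, and sizes of label level-sets inside a bush --- whose verification in our heavy-tailed setting is the technical core, carried out in Section~\ref{proof:technics}.

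Granting these, the volume estimate for $\Ball(\Map_\infty,r)$ becomes a statement about a label sublevel set of $\tau_\infty$. Combining the two distance inequalities, the event $\{v\in\Ball(\Map_\infty,r)\}$ is squeezed, up to bounded multiplicative changes of the radius --- which is all one can afford with the symbol $\approx$, and all one needs --- between $\{\ell(v)-2\Lambda_v\le c\,r\}$ and $\{\ell(v)-2\Lambda_v\le C\,r\}$; so it suffices to prove $|\{v\in\tau_\infty:\ell(v)-2\Lambda_v\le\lambda\}|\approx\lambda^{2a-1}$. Decomposing this set along the spine, a vertex $v$ in the bush grafted at $s_h$ can satisfy the condition only if $h$ lies below a first-passage time of the spine-label walk below level $-\Theta(\lambda)$, and only if $v$ lies in a label window of width $O(\lambda)$ inside its bush; summing the resulting truncated bush volumes with the relevant stable exponents returns $\lambda^{2a-1}$, and the matching lower bound $\gtrsim\lambda^{2a-1}$ comes out of the same estimates by a first/second-moment argument. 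As a consistency check, $\Map_\infty$ is the local limit of Boltzmann maps of type $a$ with $n$ vertices, whose diameter is of order $n^{1/(2a-1)}$ by~\cite{Le_Gall-Miermont:Scaling_limits_of_random_planar_maps_with_large_faces}, so a ball of radius of order $n^{1/(2a-1)}$ should carry of order $n$ vertices, i.e.\ a ball of radius $r$ should carry of order $r^{2a-1}$ vertices.

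For the tentacle estimate, the inequality $\max\{\dgr(\rho,u):u\in\hBall(\Map_\infty,r)\}\gtrsim r$ is immediate, since $\Map_\infty$ is infinite and $\Ball(\Map_\infty,r)\subset\hBall(\Map_\infty,r)$ already contains a vertex at distance of order $r$ from $\rho$. For the matching upper bound I would show that filling in the finite complementary regions of $\Ball(\Map_\infty,r)$ cannot create long thin parts: such a region is, in the mobile, a finite subtree cut off by the label condition and grafted before the spine ``escapes to infinity'', that is, at some $s_h$ with $h$ below the first time $\Lambda_h<-\Theta(r)$. Bounding the label oscillation inside that subtree in terms of its volume (to the power fixed by the label-fluctuation exponent of the model) and invoking the very estimates of the previous paragraph --- this is the borderline regime in which the largest such bush is \emph{just} large enough to host a ball of radius of order $r$ and no larger --- one obtains $\ell(u)-2\Lambda_u\lesssim r$ for every vertex $u$ of the region, whence $\dgr(\rho,u)\lesssim r$ by the contour upper bound. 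The hull volume then comes for free: $\Ball(\Map_\infty,r)\subset\hBall(\Map_\infty,r)$ gives $|\hBall(\Map_\infty,r)|\gtrsim r^{2a-1}$, while the tentacle bound gives $\hBall(\Map_\infty,r)\subset\Ball(\Map_\infty,C_\varepsilon r)$ with probability at least $1-\varepsilon$, hence $|\hBall(\Map_\infty,r)|\lesssim r^{2a-1}$.

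The routine parts are the planar combinatorics --- the mobile encoding and the Schaeffer distance inequalities --- and the elementary bookkeeping that all the discrepancies in those inequalities (bush label oscillations, the factor $2$ between the cactus bound and its converse, the shift at infinity coming from $\ell\to-\infty$ along the spine) only ever cost bounded multiplicative factors in the radius, i.e.\ fit within the tolerance of $\approx$. The genuine obstacle is entirely on the tree side: getting sharp two-sided control of the spine-label walk, of the bush volumes, and of their label level-sets in the heavy-tailed regime $a<5/2$, where the underlying Galton--Watson laws only lie in the domain of attraction of a stable law and the large faces produce large jumps of the label process. This is where~\cite{Miermont:Invariance_principles_for_spatial_multitype_Galton_Watson_trees} and the constructions behind~\cite{Le_Gall-Miermont:Scaling_limits_of_random_planar_maps_with_large_faces} come in, and it is the substance of Section~\ref{proof:technics}.
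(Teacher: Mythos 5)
Your overall plan is the one the paper follows (BDG encoding of $\Map_\infty$ via $\Tree_\infty$, cactus bound, and scaling limits for the labelled forest, i.e.\ Proposition~\ref{prop:convergence_processus} and Corollary~\ref{cor:temps_sortie_spine}), but there is a genuine gap at the one place you call ``routine'': the pointwise distance upper bound.

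The claimed inequality $\dgr(\rho,v)\le \ell(v)-2\Lambda_v+O(1)$, with $\Lambda_v$ the minimum of $\ell$ \emph{along the ancestral line of $v$}, is not the contour upper bound. In Schaeffer/BDG constructions the contour upper bound reads $\dgr(\rho,v)\le \ell(\rho)+\ell(v)-2\check\ell+2$ where $\check\ell$ is the minimum of labels over a \emph{contour interval} joining a corner of $\rho$ to a corner of $v$; since that interval passes through many bushes besides the one containing $v$, $\check\ell$ can be arbitrarily more negative than $\Lambda_v$, and adding ``the label oscillation of the bush containing $v$'' does not repair this. By contrast, $\Lambda_v$ genuinely appears only in the cactus \emph{lower} bound. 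Consequently the proposed two-sided squeeze $\{\ell(v)-2\Lambda_v\le cr\}\subset\Ball(\Map_\infty,r)\subset\{\ell(v)-2\Lambda_v\le Cr\}$ does not follow; only the right-hand inclusion does. The same defect reappears in your tentacle argument, where you again deduce $\dgr(\rho,u)\lesssim r$ from $\ell(u)-2\Lambda_u\lesssim r$.

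The paper sidesteps the pointwise bound by never trying to compare $\dgr(\rho,v)$ to $\ell(v)-2\Lambda_v$ vertex by vertex. It introduces the truncated tree $\mathbb{T}_r$ obtained from $\Tree_\infty$ by chopping off the descendants of the first white spine vertex whose label drops below $-r-3$. Then: (a) the cactus bound gives $\hBall(\Map_\infty,r)\subset\mathbb{T}_r$ as sets of white vertices (this is~\eqref{eq:inclusion1}, following the proof of \cite[Eq.~(19)]{Benjamini-Curien:Simple_random_walk_on_the_uniform_infinite_planar_quadrangulation_subdiffusivity_via_pioneer_points}); (b) the coalescence of the chains of successors issued from $\rho$ and from any $u\in\mathbb{T}_r$ yields the \emph{global} bound $\max\{\dgr(\rho,u):u\in\mathbb{T}_r\}\le 2+3\max_{u\in\mathbb{T}_r}|\ell(u)|$, hence $\mathbb{T}_r\subset\Ball(\Map_\infty,R)$ with $R\approx r$; and (c) Corollary~\ref{cor:temps_sortie_spine} gives $|\mathbb{T}_r|\approx\sigma_{r}\approx r^{2a-1}$ and $\max_{u\in\mathbb{T}_r}|\ell(u)|\approx r$ in one stroke, with no need for the bush-volume/first-passage summation and the first/second-moment argument you sketch. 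The sandwich $\hBall(\Map_\infty,r)\subset\mathbb{T}_r\subset\Ball(\Map_\infty,R)$ at the level of \emph{sets} is exactly what the $\approx$ tolerance requires and is much easier to justify than the pointwise inequality; I would recommend replacing your sublevel set $\{\ell(v)-2\Lambda_v\le \lambda\}$ by the spine-truncation $\mathbb{T}_r$ and keeping the rest of your strategy.
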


This proposition will be deduced from the results  of Le Gall \& Miermont~\cite{Le_Gall-Miermont:Scaling_limits_of_random_planar_maps_with_large_faces} who studied scaling limits of our finite Boltzmann maps conditioned to be large. The next estimates deal with \emph{maps with a boundary}. For $p \geq 1$ we denote $\Map^{(p)}$ a $ \mathbf{q}$-Boltzmann map with a (general) boundary of perimeter $2p$ which is a random bipartite planar map whose law is given by 
\[\mathbb{P}(\Map^{(p)}= \map) \propto \prod_{f  \text{ inner face}} q_{ \mathrm{deg}(f)/2}.\]
This law is important since in a filled-in exploration, during an event of type $ \mathsf{G}_{p, *}$ or $ \mathsf{G}_{*,p}$, the finite hole of perimeter $2p$ is filled-in with a copy of $ \Map^{(p)}$ independently of the past exploration. It can be shown that in the finite map $\Map^{(p)}$ with boundary-length $2p$, the volume grows like $p^{2a-1}$ whilst the diameter grows like $p^{1/2}$. The next result considers the volume growth of the balls in such a map with a boundary.

\begin{prop}\label{prop:volume_boules_cartes_bord}
There exists $c > 0$ and $\lambda > 0$ such that for all $r$ large enough and all $p \ge 2r^2$, we have
\[\Pr{|\Ball(\Map^{(p)}, r)| > \lambda r^{2a-1}} > c.\]
\end{prop}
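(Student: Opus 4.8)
The plan is to reduce the statement about a ball in a finite Boltzmann map with boundary to the already-established volume growth of balls in the infinite map $\Map_{\infty}$ (Proposition~\ref{prop:volume_boules}), using absolute continuity between $\Map^{(p)}$ and $\Map_\infty$ on small neighbourhoods of the root. The intuition is that when the boundary perimeter $2p$ is much larger than $r^2$ — and $r^2$ is the natural diameter scale of a boundary of perimeter $2p$ — the boundary is ``far away'' from the origin, so the ball of radius $r$ in $\Map^{(p)}$ should look like the ball of radius $r$ in $\Map_{\infty}$ with uniformly positive probability.

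\medskip

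First I would recall the Bouttier--Di Francesco--Guitter / Schaeffer-type encoding of $\Map^{(p)}$ by a labelled forest (a sequence of $2p$ labelled mobiles, or the spine decomposition used in~\cite{Stephenson:Local_convergence_of_large_critical_multi_type_Galton_Watson_trees_and_applications_to_random_maps}), the same encoding through which Proposition~\ref{prop:volume_boules} is proved. Under this encoding, $\Ball(\Map^{(p)}, r)$ corresponds to the part of the map built from labels that stay within $r$ of the minimal label along the boundary, and the key point is that the law of the forest near the root, restricted to the event that exploration has not yet reached the boundary, does not depend much on $p$ once $p$ is large. Concretely, I would compare the finite forest coding $\Map^{(p)}$ with the infinite mobile coding $\Map_\infty$ and show that on the event $\{$the $r$-ball does not touch the boundary$\}$ the two laws are mutually absolutely continuous with Radon--Nikodym derivative bounded away from $0$ and $\infty$, uniformly in $p \ge 2r^2$ and $r$ large.

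\medskip

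Next, from Proposition~\ref{prop:volume_boules} we know $|\Ball(\Map_\infty, r)| \approx r^{2a-1}$, hence there is $\lambda_0 > 0$ and $c_0 > 0$ with $\Pr{|\Ball(\Map_\infty, r)| > 2\lambda_0 r^{2a-1}} > 2c_0$ for all $r$ large. On the same probability space (via the coupling of encodings), the event $\{|\Ball(\Map_\infty, r)| > 2\lambda_0 r^{2a-1}\}$ forces the ball to be contained in a region involving only finitely many vertices whose labels stay within $[-r, r]$; I would argue that, again using $p \ge 2r^2$ and standard fluctuation estimates for the label process (the minimal label on the boundary is of order $-\sqrt{p} \le -\sqrt{2}\, r$, so with uniformly positive probability the labels relevant to the $r$-ball never come within, say, $r/2$ of the boundary minimum), this event has probability $\ge c_0$ under $\Map^{(p)}$ as well, up to adjusting constants. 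Combining the absolute-continuity bound with the lower bound on $\Map_\infty$ then yields $\Pr{|\Ball(\Map^{(p)}, r)| > \lambda r^{2a-1}} > c$ for suitable $\lambda, c > 0$.

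\medskip

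The main obstacle I anticipate is making the ``boundary is far'' step quantitative and uniform: one must control the label process along the (long) boundary of $\Map^{(p)}$ and rule out the unlikely-but-not-negligible event that a low-label vertex on the boundary is connected by a short geodesic to the root, which would shrink the $r$-ball. This requires the fluctuation/tentacle estimates for labelled mobiles of the type appearing in the proof of Proposition~\ref{prop:volume_boules} and of the forthcoming Proposition~\ref{prop:aperture} (aperture bounds), applied to the boundary forest; in the stable regime $a \in (\frac32, \frac52]$ one invokes the invariance principles of~\cite{Le_Gall-Miermont:Scaling_limits_of_random_planar_maps_with_large_faces} and the multitype spine decomposition of~\cite{Miermont:Invariance_principles_for_spatial_multitype_Galton_Watson_trees}. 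Everything else — the encoding, the local absolute continuity, and assembling the constants — is routine once this separation-of-scales estimate is in place.
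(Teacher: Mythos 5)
Your reduction to Proposition~\ref{prop:volume_boules} does not work, because it treats the origin $\rho$ of $\Map^{(p)}$ as an interior vertex that can be separated from the boundary. In fact $\rho$ is the tail of the root-edge, which lies \emph{on} the boundary of $\Map^{(p)}$ (and in the one place this proposition is used, inside the proof of Lemma~\ref{lem:petite_masse}, $\rho$ is identified with the boundary vertex $x_n^-$ of the hole being filled in). Consequently $\Ball(\Map^{(p)}, r)$ always meets the boundary, the event ``the $r$-ball does not touch the boundary'' you condition on is empty, and the hypothesis $p \ge 2r^2$ is not a separation-of-scales assumption pushing the boundary far from $\rho$. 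A mutual absolute continuity with uniformly bounded density between the $r$-neighbourhood of a boundary vertex of $\Map^{(p)}$ and the $r$-neighbourhood of the interior root of $\Map_\infty$ also cannot hold: the external face, of degree $2p \to \infty$, is incident to $\rho$ in $\Map^{(p)}$, whereas $\Map_\infty$ has only finite faces. And even setting those points aside, your outline never accounts for the $1/|\Map^{(p)}_\bullet|$ size-biasing of~\eqref{eq:sizebias}, which must be controlled when passing from the pointed to the un-pointed Boltzmann law.

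For comparison, the paper never invokes $\Map_\infty$ here. It works directly on the forest encoding of the \emph{pointed} map $\Map^{(p)}_\bullet$ and uses~\eqref{eq:sizebias} explicitly: it exhibits a good event, of probability bounded below uniformly in $p \ge 2r^2$ via~\eqref{eq:convergence_LGM} and the Brownian-bridge limit of the boundary labels, on which the first $r^2$ of the $p$ mobiles carry total volume at least $\lambda r^{2a-1}$ with all their labels (and their root labels) at most $r/2$ in absolute value, while simultaneously $|\Map^{(p)}_\bullet| \le p^{a-1/2}$. The first two facts, through the successor-chain (cactus) bound, place those $\gtrsim \lambda r^{2a-1}$ white vertices inside $\Ball(\Map^{(p)}_\bullet, 2r+2)$; the last fact bounds the bias factor in~\eqref{eq:sizebias}. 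The role of $p \ge 2r^2$ is simply that a random bridge of length $p$ with this step law has a uniformly positive chance of keeping its first $r^2$ values within $[-r/2, r/2]$. If you insist on an absolute-continuity route, the right comparison object would have to be a map model whose origin is itself a boundary vertex (a half-planar model or $\Map^{(p)}_\infty$), but the paper provides no ball-volume estimate for such a model, so this would not shorten the argument.
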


We can also define $\Map^{(p)}_{\infty}$ the infinite $ \mathbf{q}$-Boltzmann map with boundary of perimeter $2p$ as a local limit of finite maps conditioned to be large, see e.g.~\cite{Curien:Peccot}.  If $\map$ is a map with a boundary we define the \emph{aperture} of $\map$ as  \[\aper( \map) = \max\{ \dgr(x,y) : x,y \mbox{ two vertices on the boundary of }\map\}.\]

\begin{prop}[Growth of the aperture]
\label{prop:aperture}
We have
\[\aper(\Map^{(p)}_{\infty}) \approx p^{1/2}.\]
\end{prop}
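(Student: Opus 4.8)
The plan is to establish the two bounds $\aper(\Map^{(p)}_\infty) \gtrsim p^{1/2}$ and $\aper(\Map^{(p)}_\infty) \lesssim p^{1/2}$ separately, using the Schaeffer-type labelled-tree encoding of the infinite Boltzmann map with a boundary. Recall that $\Map^{(p)}_\infty$ can be coded (following~\cite{Bjornberg-Stefansson:Recurrence_of_bipartite_planar_maps,Stephenson:Local_convergence_of_large_critical_multi_type_Galton_Watson_trees_and_applications_to_random_maps}) by a forest of $p$ labelled multitype Galton--Watson trees attached along the boundary cycle, where the labels record (up to an additive constant) the graph distances to a fixed reference point and the boundary vertices carry the labels of a downward-drifting random walk indexed by the cycle. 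The key point is that a pair of boundary vertices $x,y$ at cyclic distance roughly $k$ along the $2p$-gon have labels differing by a centred heavy-tailed random walk run for $k$ steps, so that $|\ell(x)-\ell(y)|$ is typically of order $k^{1/(a-1)}$ or so; but the actual graph distance $\dgr(x,y)$ is governed not by label differences alone but by the cactus/geodesic bounds, and one must control both directions carefully.

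For the \emph{lower bound} $\aper \gtrsim p^{1/2}$, I would exhibit two boundary vertices at graph distance $\gtrsim p^{1/2}$ with probability bounded away from zero; combined with the $\approx$ definition this suffices (a matching tail estimate follows by a union/rerooting argument or is in fact not needed since $\aper$ is a single random variable and the definition of $\approx$ only asks for the stated probabilistic comparison). Take $x$ and $y$ antipodal on the boundary $2p$-gon. The Schaeffer bound gives $\dgr(x,y) \ge |\ell(x)-\ell(y)|$, but this label difference is only of order $p^{1/2}$ when $a = 5/2$ and is actually \emph{larger} ($p^{1/(a-1)} \gg p^{1/2}$) when $a<5/2$ — which at first looks like it would contradict the claimed exponent. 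The resolution, and the real content, is that the correct distance estimate in maps with large faces is \emph{not} $\dgr \approx |\Delta\ell|$ but rather involves the ``$r$-separation'' of labels in the sense of Le Gall--Miermont: because of the large faces one can shortcut through the interior, and the genuine lower bound on $\dgr(x,y)$ is of order $(\text{label difference})^{?}$ damped appropriately. I would instead lower-bound the aperture by noting that the \emph{volume} of $\Map^{(p)}_\infty$ inside any ball of radius $r$ around $\rho$ is at most $\approx r^{2a-1}$ (Proposition~\ref{prop:volume_boules}, transferred to the boundary setting), whereas the boundary has $2p$ vertices; if all boundary vertices were within distance $r$ of each other they would lie in a hull of volume $\approx r^{2a-1}$, forcing $p \lesssim r^{2a-1}$... this gives $r \gtrsim p^{1/(2a-1)}$, which is \emph{weaker} than $p^{1/2}$ when $a > 3/2$. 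So volume alone is insufficient and one genuinely needs the label (tentacle) structure: I expect the right argument is that with positive probability a macroscopic sub-arc of the boundary hangs a labelled subtree whose minimal label is $\lesssim -p^{1/2}$ (a random-walk fluctuation of the $p$ boundary labels over a scale-$p$ window is of order... again $p^{1/(a-1)}$), and then reconstruct the distance via the standard two-sided bound $\dgr(x,\rho) \le \ell(x) - \min \ell + \text{const}$ together with a matching geodesic lower bound of order $p^{1/2}$ coming from the diameter growth of $\Map^{(p)}$.

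For the \emph{upper bound} $\aper \lesssim p^{1/2}$, the cleanest route is to compare $\Map^{(p)}_\infty$ with the finite map $\Map^{(p)}$ via absolute continuity / local-limit considerations, and use that $\diam(\Map^{(p)}) \approx p^{1/2}$ (stated in the paragraph preceding the proposition, provable from Le Gall--Miermont scaling limits as in Proposition~\ref{prop:volume_boules}): since the aperture is dominated by the diameter, $\aper(\Map^{(p)}) \le \diam(\Map^{(p)}) \lesssim p^{1/2}$; transferring this from the finite to the infinite model requires controlling how the infinite map's ``spine to infinity'' affects boundary-to-boundary distances — but distances between \emph{boundary} vertices are measured within any submap containing the boundary, so one can realise $\Map^{(p)}$ as a piece of $\Map^{(p)}_\infty$ on an event of probability bounded below (the hull of a large ball in $\Map^{(p)}_\infty$ has the law of $\Map^{(p)}$ conditioned appropriately, up to the boundary perimeter), giving $\aper(\Map^{(p)}_\infty) \le \aper(\Map^{(p)}_\infty \text{ inside that hull}) $ with the distances only decreasing. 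Concretely, I would run the filled-in peeling of $\Map^{(p)}_\infty$ until the perimeter first doubles, observe that on a positive-probability event the explored region $\e_N$ contains the whole original boundary and has the law of $\Map^{(p')}$ for some random $p' \in [p,3p]$, and invoke the finite diameter bound. The main obstacle is the lower bound: one must show $\aper$ is not $o(p^{1/2})$ while the naive label bound points to the wrong (larger) exponent $p^{1/(a-1)}$, so the argument must use the subtle fact — implicit in~\cite{Le_Gall-Miermont:Scaling_limits_of_random_planar_maps_with_large_faces} — that in the stable-map scaling limit, distances scale like (volume)$^{1/(2a-1)}$ and a boundary of length $2p$ sits at scale $p^{1/2}$ in these units, so that boundary-to-boundary distances are $\Theta(p^{1/2})$ and no smaller; making this rigorous at the discrete level, rather than invoking the continuum object, is where most of the work lies and where I would lean on the labelled-forest estimates developed in Section~\ref{proof:technics}.
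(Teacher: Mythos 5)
Your argument rests on a misconception that derails both bounds. You assert repeatedly that the label difference between two boundary vertices at cyclic distance $k$ along the $2p$-gon is of order $k^{1/(a-1)}$, attributing to the boundary label process the heavy-tailed scaling of the face degrees. This is false: the labels of the roots of the mobiles $\T_1, \dots, \T_p$ form a bridge whose increments have the fixed, $\q$-independent law $\Pr{\xi = k} = 2^{-2-k}$, $k \geq -1$, from~\eqref{eq:increments_bridge}, and this law has finite variance. Consequently the root labels fluctuate at scale $p^{1/2}$ for \emph{every} $a \in (\tfrac{3}{2}, \tfrac{5}{2}]$. The exponent $a$ enters the picture only through the offspring distributions $\mu_\circ, \mu_\bullet$, hence through the tree sizes and the label fluctuations \emph{inside} each mobile, and not through the bridge along the boundary cycle.

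Once this is corrected, the label bound you dismissed as ``naive'' is exactly the paper's proof and gives the right exponent directly. Lower bound: $\aper(\Map^{(p)}_\infty)$ is at least the largest difference among the $p$ root labels (the roots all lie on the boundary), which is $\approx p^{1/2}$ by the above. Upper bound: let $\Delta_p$ be the largest absolute label over the finite trees $\T_2, \ldots, \T_p$; these trees together contain $\approx p^{a-1/2}$ white vertices (from the $(a-\tfrac{1}{2})$-stable scaling of $S^\circ$ in~\eqref{eq:convergence_LGM}), and since labels grow like $(\text{size})^{1/(2a-1)}$ one gets $\Delta_p \approx (p^{a-1/2})^{1/(2a-1)} = p^{1/2}$; any two boundary vertices then have their chains of successors coalesce within $2\Delta_p + 2$ steps, so $\aper(\Map^{(p)}_\infty) \le 4\Delta_p + 4 \approx p^{1/2}$. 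No peeling, finite-model coupling, or separate diameter estimate is required. Your alternative upper-bound route via a stopped peeling and $\diam(\Map^{(p)}) \lesssim p^{1/2}$ is plausible in spirit but considerably heavier (it needs the finite-map diameter estimate and a nontrivial realisation of $\Map^{(p)}$ inside $\Map^{(p)}_\infty$), and your lower-bound discussion never converges on a concrete argument --- the perceived tension with the exponent $p^{1/(a-1)}$ is an artefact of the initial error about the bridge increments, not a genuine obstruction.
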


\subsection{Proof of the main result}
\label{sec:thm}

We may now prove Theorem~\ref{thm:volume_peeling} relying on the preceding estimates and on properties of the peeling process. Fix any peeling process $(\e_n)_{n \ge 0}$ of $\Map_\infty$, and for every $n \ge 0$, let us denote by $D_n^-$ and $D_n^+$ the smallest and the largest distance \emph{in the whole map} to the origin $\rho$ of a vertex on the boundary $\partial \e_n$. We stress that $D_n^-$ is measurable with respect to $\e_n$, and it equals the smallest distance \emph{in the submap} $\e_n$ to the origin $\rho$ of a vertex on $\partial \e_n$, whereas $D_n^+$ is not measurable with respect to $\e_n$ and is smaller than or equal to the largest distance  in the submap $\e_n$ to the origin $\rho$ of a vertex on $\partial \e_n$.
Clearly, 
\begin{equation}\label{eq:bornes_faciles_peeling}
\hBall(\Map_\infty, D_n^- - 1) \subset \e_n \subset \hBall(\Map_\infty, D_n^+ + 1).
\end{equation}
Theorem~\ref{thm:volume_peeling} thus follows if we prove that
\[D_n^- \gtrsim n^{ \frac{1}{2(a-1)}}
\qquad\text{and}\qquad
D_n^+ \lesssim n^{ \frac{1}{2(a-1)}}.\]
We shall prove these two bounds separately. The second bound $D_n^+ \lesssim n^{ \frac{1}{2(a-1)}}$ is the easy one, it will follow from the same proof technique as that of~\cite{Benjamini-Curien:Simple_random_walk_on_the_uniform_infinite_planar_quadrangulation_subdiffusivity_via_pioneer_points} also recalled in~\cite[Proposition~3.1]{Curien-Marzouk:How_fast_planar_maps_get_swallowed_by_a_peeling_process} and is primarily based on the aperture estimate of Proposition~\ref{prop:aperture}. The first bound $D_n^- \gtrsim n^{ \frac{1}{2(a-1)}}$ will follow from more precise volume consideration. The main idea being that if $D_{n}^{-}$ is small, then a lot of ``large'' peeling steps will accumulate too much volume near the origin of $\Map_{\infty}$.

\paragraph{Upper bound $D_n^+ \lesssim n^{ \frac{1}{2(a-1)}}$ via aperture.} We follow the same lines as~\cite[Proposition 3.1]{Curien-Marzouk:How_fast_planar_maps_get_swallowed_by_a_peeling_process} in our more general context of infinite ``discrete stable'' maps of type $a \in (\frac{3}{2}, \frac{5}{2}]$. Since $|\e_n| \approx n^{\frac{a - 1/2}{a-1}}$ by Proposition~\ref{prop:volume_peeling} and $|\hBall(\Map_\infty, r)| \approx r^{2a-1}$ by Proposition~\ref{prop:volume_boules}, we deduce from~\eqref{eq:bornes_faciles_peeling} the first bounds
\[D_n^- \lesssim n^{\frac{1}{2(a-1)}}
\qquad\text{and}\qquad
D_n^+ \gtrsim n^{\frac{1}{2(a-1)}}.\]
Notice also the easy bound $D_n^+ - D_n^- \le \aper(\Map_\infty \setminus \e_n)$; now recall that the \emph{spatial Markov property} of the peeling exploration  asserts that conditionally on $|\partial \e_n|$, the unexplored region $\Map_\infty \setminus \e_n$ is independent of $\e_n$ and has the law of $\Map_\infty^{(p)}$ with $p = |\partial \e_n|/2$. Since $|\partial \e_n| \approx n^{\frac{1}{a-1}}$ by Proposition~\ref{prop:volume_peeling},  we conclude using Proposition~\ref{prop:aperture}  that
\[D_n^+ - D_n^- \lesssim |\partial \e_n|^{1/2} \approx n^{\frac{1}{2(a-1)}}.\]
Combined with the previous bound, we get $D_n^+ \approx n^{\frac{1}{2(a-1)}}$ as desired.

\paragraph{Lower bound $D_n^- \gtrsim n^{ \frac{1}{2(a-1)}}$ via accumulation of volume near the origin.} As announced, the lower bound will follow from volume consideration. More precisely we shall consider $|\Ball(\e_n, 2r)|$ the number of vertices in the ball of radius $r$ in the submap $ \e_{n}$ centred at the origin $\rho$ of the map $ \Map_{\infty}$ and study its variation $\Delta |\Ball(\e_n, 2r)| = |\Ball(\e_{n+1}, 2r)|-|\Ball(\e_{n}, 2r)|$. Below we write $ (\mathcal{F}_{n})_{n\geq 0}$ for the filtration generated by the peeling process and recall that $ D_{n}^{-}$ as well as $|\partial \e_{n}|$ are measurable with respect to $ \mathcal{F}_{n}$.

\begin{lem}\label{lem:petite_masse}
There exists two constants $K, \lambda > 0$ such that for all $r$ and $n$ large enough, we have
\[\Prc{\Delta |\Ball(\e_n, 2r)| > \lambda r^{2a-1}}{\mathcal{F}_n, D_n^- \le r}
\ge K \cdot |\partial \e_n|^{-(a-1)} \mathbf{1}_{|\partial \e_n| \ge 5r^{2}}.\]
\end{lem}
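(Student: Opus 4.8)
The plan is to exhibit, conditionally on $\mathcal F_n$ and on $\{D_n^-\le r\}$, a single favourable outcome of the $(n+1)$-st peeling step whose conditional probability is $\gtrsim|\partial\e_n|^{-(a-1)}$ and on which $\Ball(\e_n,2r)$ is forced to gain $\gtrsim r^{2a-1}$ vertices. Throughout I argue on $\{|\partial\e_n|=p\ge 5r^2\}$ (the statement being empty otherwise) and I fix an $\mathcal F_n$‑measurable vertex $v^\star\in\partial\e_n$ with $d_{\e_n}(\rho,v^\star)=D_n^-\le r$, where $d_{\e_n}$ denotes the graph distance in the submap. Since $\e_n\subset\e_{n+1}$ we have $\Ball(\e_n,2r)\subseteq\Ball(\e_{n+1},2r)$, so it is enough to bound below the number of \emph{new} vertices — those interior to the region glued at step $n+1$ — lying at $d_{\e_{n+1}}$‑distance at most $2r$ from $\rho$.

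First I single out the favourable step. Whatever edge the algorithm peels, let $2m$ be the number of edges of $\partial\e_n$ separating it from $v^\star$ on the shorter of the two sides, so $m\le p/2$ is $\mathcal F_n$‑measurable, and set $\mu:=\max(m,2r^2)$. I keep only the peeling events of type $\mathsf G$ whose swallowed finite hole lies on that side and has half‑perimeter $p_1\in[\mu,\min(2\mu,p-1)]$: on such an event the finite hole contains $v^\star$ on its boundary and $p_1\ge 2r^2$. By the spatial Markov property the conditional law of the step given $\mathcal F_n$ does not depend on the algorithm, and the probability of such a $\mathsf G$‑event with prescribed $p_1$ is, up to a universal constant, $\nu(-p_1-1)\,h^{\uparrow}(p-1-p_1)/h^{\uparrow}(p)$ — the $h^{\uparrow}$‑transformed probability of a jump $-(p_1+1)$ of the underlying $\nu$‑walk. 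Summing over $p_1\in[\mu,\min(2\mu,p-1)]$, using that $\nu(-k)$ is comparable to $k^{-a}$ and $h^{\uparrow}(k)$ to $\sqrt k$ and distinguishing whether $\mu$ is comparable to $p$ (where one sums the $\sqrt{p-1-p_1}$ contributions) or much smaller (where the $h^{\uparrow}$‑ratio is bounded below over $\asymp\mu$ terms of size $\asymp\mu^{-a}$), one obtains in every case a lower bound of order $\mu^{-(a-1)}$. Since $\mu\le p$ this is $\gtrsim p^{-(a-1)}$, and when $\mu=2r^2$ it equals $\asymp r^{-2(a-1)}$, which is $\gtrsim p^{-(a-1)}$ because $r^2\le p/5$; hence the conditional probability of producing such a step is $\gtrsim p^{-(a-1)}$, uniformly.

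Then I turn the step into volume. Conditionally on it, the spatial Markov property says the swallowed region is an independent copy of $\Map^{(p_1)}$ with $p_1\ge 2r^2$, and $v^\star$ is one of its boundary vertices; by invariance of the law of $\Map^{(p_1)}$ under re‑rooting along its boundary I may take $v^\star$ to be its root vertex. Any vertex $w$ of this region with $d_{\Map^{(p_1)}}(v^\star,w)\le r$ satisfies $d_{\e_{n+1}}(\rho,w)\le d_{\e_{n+1}}(\rho,v^\star)+r\le D_n^-+r\le 2r$, whence
\[\Delta|\Ball(\e_n,2r)|\ \ge\ |\Ball(\Map^{(p_1)},r)|-B_r,\]
where $B_r$ is the number of boundary vertices of $\Map^{(p_1)}$ within distance $r$ of $v^\star$. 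By Proposition~\ref{prop:volume_boules_cartes_bord}, $|\Ball(\Map^{(p_1)},r)|>\lambda r^{2a-1}$ with probability at least $c$, uniformly in $p_1\ge 2r^2$. If $\mu=2r^2$ the swallowed hole carries at most $8r^2$ boundary vertices, so $B_r\le 8r^2$ and, since $2a-1>2$, the display already gives $\Delta|\Ball(\e_n,2r)|\ge\tfrac\lambda2 r^{2a-1}$ for $r$ large. In the remaining case $\mu=m>2r^2$ — the adversarial algorithm peeling combinatorially far from $v^\star$ — the hole may carry up to order $p$ boundary vertices, and one needs that nonetheless only $O(r^2)$ of them lie within distance $r$ of $v^\star$; equivalently, that a ball of radius $r$ around a boundary vertex of $\Map^{(p)}$ meets only $O(r^2)$ of its boundary, uniformly in $p\ge 2r^2$. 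This is the diffusive counterpart of the aperture estimate of Proposition~\ref{prop:aperture} (the boundary spreads at speed $\sqrt{\mathrm{length}}$ from any of its vertices) and I would establish it by the same labelled‑tree construction in Section~\ref{proof:technics}. Granting it, $\Pr{B_r>\tfrac\lambda2 r^{2a-1}}<c/2$ for $r$ large, so the event $\{|\Ball(\Map^{(p_1)},r)|>\lambda r^{2a-1}\}\cap\{B_r\le\tfrac\lambda2 r^{2a-1}\}$ has conditional probability at least $c/2$ and on it $\Delta|\Ball(\e_n,2r)|\ge\tfrac\lambda2 r^{2a-1}$. Combining the two cases with the first paragraph, and using that the glued region is independent of $\e_n$, gives the lemma with $\lambda$ replaced by $\lambda/2$ and $K$ a suitable multiple of $c$.

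The delicate point is exactly that last boundary estimate. The freedom left to the algorithm in choosing the peel edge forces one, as soon as $|\partial\e_n|\gg r^{2a-1}$, to swallow a macroscopically long hole merely in order to reach $v^\star$; without metric control of how much of such a long boundary sits within distance $r$ of a given boundary vertex, one cannot rule out that the freshly created part of $\Ball(\e_n,2r)$ consists almost entirely of old boundary vertices rather than genuinely new ones. Everything else is the transition‑probability computation above together with Propositions~\ref{prop:volume_boules_cartes_bord} and~\ref{prop:aperture}.
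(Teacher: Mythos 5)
Your plan is the same as the paper's own proof: exhibit a single favourable $\mathsf G$‑type peeling step that swallows a finite hole of half‑perimeter $p_1\ge 2r^2$ containing a boundary vertex at distance $D_n^-\le r$ from $\rho$, lower‑bound its conditional probability by $\gtrsim|\partial\e_n|^{-(a-1)}$ via the explicit transition kernel $\nu(-k)\,h^\uparrow(p-k)/h^\uparrow(p)$, then apply Proposition~\ref{prop:volume_boules_cartes_bord} together with re‑rooting invariance of the Boltzmann law. The one structural difference is cosmetic: you track the combinatorial distance $m$ from the peel edge to $v^\star$ and prescribe a jump of size $\approx\mu=\max(m,2r^2)$ on the correct side, whereas the paper simply requires $-\Delta|\partial\e_n|-1\ge|\partial\e_n|/2$ and uses the left/right symmetry of the $\mathsf G$‑transitions: conditional on such a jump, the two possible swallowed arcs each have more than half the boundary length, so together they cover $\partial\e_n$ and $x_n^-$ is swallowed with conditional probability at least $1/2$. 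The paper's formulation is cleaner and avoids the case split on $\mu$ that you then have to carry through the rest of the argument.

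The genuine gap in your write‑up is exactly where you flag it, and it is a gap. You correctly observe that $|\Ball(\Map^{(p_1)},r)|$ over‑counts boundary vertices of $\Map^{(p_1)}$, which are already vertices of $\e_n$ and may already lie in $\Ball(\e_n,2r)$, so that only $|\Ball(\Map^{(p_1)},r)|-B_r$ is guaranteed to contribute to $\Delta|\Ball(\e_n,2r)|$. But the auxiliary claim you defer to, namely $B_r=O(r^2)$ uniformly in $p_1\ge 2r^2$ ``by the same labelled‑tree construction'', is neither proved nor an easy consequence of Proposition~\ref{prop:aperture}, which controls the total spread of the boundary ($\approx\sqrt{p_1}$), not its occupation near a given point. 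A straightforward cactus/occupation‑time estimate on the boundary label bridge of length $p_1$ gives at best $O(r\sqrt{p_1})$ boundary vertices with label within $r$ of $\ell(v^\star)$, which can exceed $r^{2a-1}$ as soon as $p_1\gg r^{4a-4}$, a regime you cannot rule out when $\mu=m$ is of order $p$. So in the sub‑case $\mu=m>2r^2$, the last step of your proof is not justified. The most economical repair --- and arguably what Proposition~\ref{prop:volume_boules_cartes_bord} really delivers --- is to observe that in its proof the $\gtrsim r^{2a-1}$ vertices produced on the event $\mathcal E_\lambda$ lie in the first $r^2$ mobiles of the BDG forest, so at most $O(r^2)\ll r^{2a-1}$ of them are roots, hence a proportion $1-o(1)$ of them are genuinely interior vertices of $\Map^{(p_1)}$; with that refinement the subtraction of $B_r$ becomes unnecessary and both your proof and the paper's are fully rigorous.
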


\begin{proof}
Let us condition on $\mathcal{F}_n$ and on the events $D_n^- \le r$ and $ |\partial  \e_{n}| \geq 4 r^{2}$. Suppose that in the next peeling step, we identify the peel edge with another one on the boundary, separating from infinity a part of the boundary containing (twice) $-\Delta |\partial \e_n| - 1 \ge |\partial \e_n| / 2$ edges. On such an event, by symmetry, there is a chance at least $1/2$ that the boundary swallowed in the finite part contains a vertex $x_n^-$ say, at distance $D_n^- \le r$ from the origin. Then we fill-in this hole by inserting an independent finite Boltzmann map with half-perimeter $-\Delta |\partial \e_n|-1$. Since such a map is invariant under re-rooting along the boundary, we may assume that its root-vertex is matched with $x_n^-$. See Figure~\ref{fig:preuve}.

\begin{figure}[!ht]\centering
\includegraphics[width=.9\linewidth]{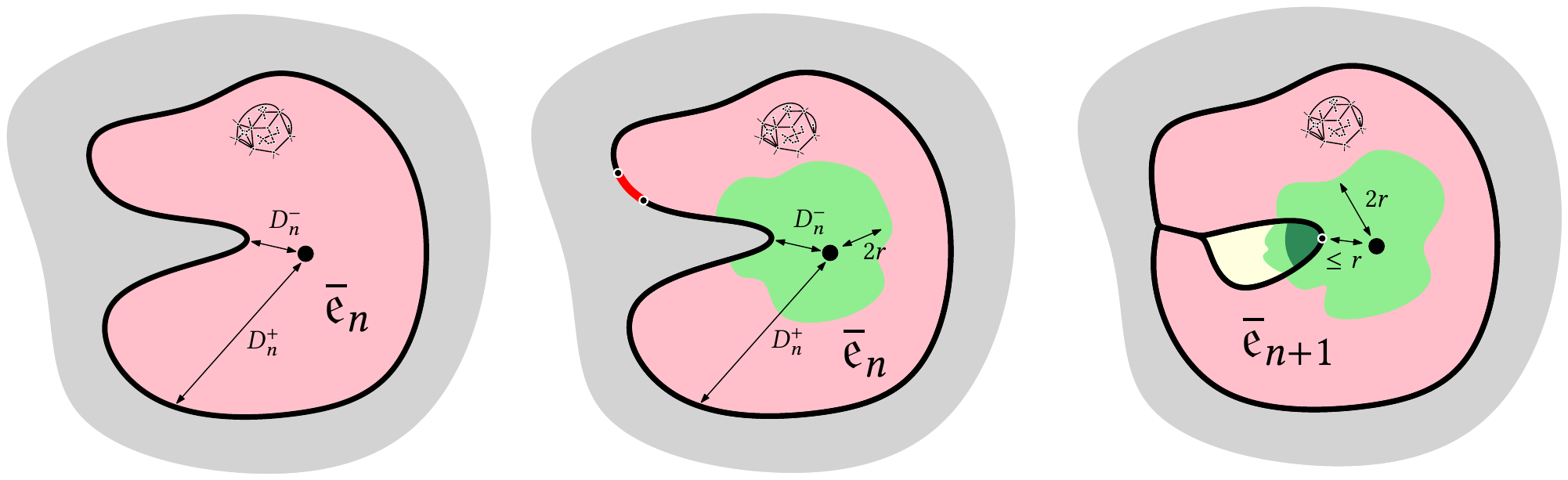}
\caption{Illustration of the proof of Lemma~\ref{lem:petite_masse}. The edge to peel is the red edge. The green region is $\Ball(\e_n, 2r)$. On the event where $- \Delta | \partial \e_{n}| \ge | \partial \e_{n}|/2$ there is a chance at least $1/2$ that the swallowed part of the boundary contains a point at minimal distance (inside $ \e_{n}$) from the origin. If $D_{n}^{-}<r$ then  we add (at least!) to $\Ball( \e_{n+1},2r)$ the ball of radius $r$ in the map filling-in the hole centered at this point  (in dark green on the figure above).}
\label{fig:preuve}
\end{figure}

In this scenario $\Delta |\Ball(\e_n, 2r)|$ is larger than or equal to the volume of the ball of radius $r$ in the map with half-boundary $-\Delta |\partial \e_n|-1$ we just added. According to Proposition~\ref{prop:volume_boules_cartes_bord}, there exist $c > 0$ and $\lambda > 0$ such that for any $p \ge 2r^{2}$, we have $\P(|\Ball(\Map^{(p)}, r)| > \lambda r^{2a-1}) > c$ for all $r$ large enough.
Therefore, for $r$ large, as soon as $| \partial \e_{n}| \ge 4r^2$, on the event $-\Delta |\partial \e_n| - 1 \ge |\partial \e_n| / 2  \ge 2 r^2$, the ball of radius $r$ in the map we add to fill-in the hole has volume at least $\lambda r^{2a-1}$ with probability at least $c$. By the exact transition probabilities of the Markov chain $ | \partial \e_{n}|$ and the facts that $\nu(-k)\sim \mathsf{p}_{ \mathbf{q}}k^{-a}$ and $h^{\uparrow}(p) \sim c' \sqrt{p}$ for some $c' > 0$,
the probability that such a peeling step occurs is bounded below by
\[\Prc{\Delta | \partial \e_{n}| \le - | \partial \e_{n}| / 2}{\mathcal{F}_n}
\geq  \sum_{k = | \partial \e_{n}|/2}^{3| \partial \e_{n}|/4} \frac{h^\uparrow(| \partial \e_{n}|-k)}{h^\uparrow(| \partial \e_{n}|)} \nu(-k)
\ge C \cdot | \partial \e_{n}|^{1-a}.\]
as $| \partial \e_{n}| \to \infty$, for some constant $C>0$. Moreover, given that $\Delta | \partial \e_{n}| - 1 \le - | \partial \e_{n}| / 2$, the probability that a given vertex $x_n^-$ at distance $D_n^- \le r$ from the origin sits on the part of the boundary separated from infinity is at least $1/2$. Gathering-up the pieces we deduce as desired that
\[\Prc{\Delta |\Ball(\e_n, 2r)| > \lambda r^{2a-1}}{\mathcal{F}_n, D_n^- \le r, |\partial \e_n| \ge 4 r^{2}}
\ge  \frac{1}{2} \times c \times  C \cdot |\partial \e_n|^{-(a-1)}\]
for $r,n$ large enough.
\end{proof}

Let us come back to the proof of the lower bound $D_n^- \gtrsim n^{ \frac{1}{2(a-1)}}$ in order to complete the proof of Theorem~\ref{thm:volume_peeling}. Let us fix $\varepsilon > 0$, we aim at showing that there exists $\delta > 0$ such that for all $n$ large enough, we have $\P(D_n^- \le \delta n^{\frac{1}{2(a-1)}}) \le \varepsilon$.  Fix $A$ large (the value of $A$ will be fixed in a few lines by $\varepsilon$), we will first choose  $\eta$ small enough so that 
\begin{equation}\label{eq:eventlot}
\Pr{\sum_{i = 1}^n \frac{K}{|\partial \e_i|^{a-1}} \mathbf{1}_{|\partial \e_n| \ge \eta n^{1/(a-1)}} > A} \ge 1-\varepsilon/4,
\end{equation}
where $K$ is the constant appearing in Lemma~\ref{lem:petite_masse}. This is indeed possible since  by~\cite[Theorem~3.6]{Budd-Curien:Geometry_of_infinite_planar_maps_with_high_degrees} in the case $a\ne 2$ and~\cite[Theorem~1]{Budd-Curien-Marzouk:Infinite_random_planar_maps_related_to_Cauchy_processes} in the case $a= 2$: for each $\eta>0$ the sum in the above display converges in distribution towards 
\[\int_{0}^{1} \frac{ \mathrm{d}t}{(\Upsilon_{t}^{\uparrow})^{a-1}} \mathbf{1}_{\Upsilon_{t}^{\uparrow} > \eta},\]
where $\Upsilon_{t}^{\uparrow}$ is an $(a-1)$-stable L\'evy process conditioned to stay positive (the details of this process can be found in the above references) for which we have  $ \int_{0}^{1} (\Upsilon_{t}^{\uparrow})^{1-a} \mathrm{d} t = \infty$ almost surely by an application of Jeulin's lemma, see~\cite[Corollary~27]{Curien:Peccot}. We now apply Lemma~\ref{lem:petite_masse} with $r \equiv r_{n} = \delta n^{1/(2a-2)}$ with $\delta$ chosen small enough so that $\eta \ge 4\delta^2$: for all $n$ large enough it holds that
\[\Prc{\Delta |\Ball(\e_i, 2 r_n)| > \lambda r_n^{2a-1}}{\mathcal{F}_n, D_i^- \le r_n}
\ge K \cdot |\partial \e_i|^{-(a-1)} \mathbf{1}_{|\partial \e_i| \ge \eta n^{\frac{1}{a-1}}},\]
for all $i \in \{1, \dots, n\}$.  Let us denote by $X_{n,i}$ the indicator of the event in the conditional probability above. Notice that since $D_{i}^{-}$ is non-decreasing, if $D_{n}^{-} \leq r_n$ then $D_{i}^{-} \leq r_n$ for all $1 \leq i \leq n$. By this remark, conditionally on $D_{n} \leq r_{n}$ and on  the event studied in~\eqref{eq:eventlot}, the variable $ \sum_{i=1}^{n} X_{n,i}$ is stochastically lower bounded by a sum of independent Bernoulli random variables $Z_{i}$ with success parameter $ 0<p_{i}<  \eta^{a-1}/i$ and so that $\sum_{i=1}^{n} p_{i} \geq A$ for $n$ large enough. An easy Chernoff bound then shows that
\[\Pr{\sum_{i=1}^{n} Z_{i} >  \frac{A}{8}} \geq 1- \varepsilon/4.\]
When this scenario occurs, the ball of radius $2 r_n$ in $\Map_{\infty}$ contains a volume of at least $A/8$ times $ \lambda r_n^{2a-1}$ whence we deduce that 
\[\Pr{|\Ball( \Map_{\infty}, 2r_n)| \geq  \frac{A}{8} \lambda r_n^{2a-1}} \geq \Pr{D_{n}^{-} \leq  \delta n^{1/(2(a-1))}} - \frac{ \varepsilon}{4} -  \frac{ \varepsilon}{4}.\]
Now, one can further assume that $A$ was chosen large enough so that $ \P(|\Ball( \Map_{\infty}, 2r)| \geq \frac{A}{8} \lambda r^{2a-1}) \leq \frac{\varepsilon}{2}$ for all $r$ large enough by Proposition~\ref{prop:volume_boules}. This finally proves that $\P(D_{n}^{-} \leq  \delta n^{1/(2(a-1))}) \leq  \varepsilon$ as desired.
\qed

\section{Applications}
\label{sec:corollaires}

Let us apply Theorem~\ref{thm:volume_peeling} to three peeling procedures especially designed to study the volume growth of the dual map, the  behaviour of a simple random walk on $\Map_\infty$, and the behaviour of a simple random walk on $\Map_\infty^{\dagger}$.

\subsection{Comparison with the dual map}

\begin{proof}[Proof of Corollary~\ref{cor:volume_primal_dual}]
We apply Theorem~\ref{thm:volume_peeling} to the peeling $\e_n$ with the algorithm $ \mathcal{L}_{ \mathrm{dual}}$ defined in~\cite[Section~2.3]{Budd-Curien:Geometry_of_infinite_planar_maps_with_high_degrees}. Very briefly, in this algorithm, we start with the root-face of $\Map_\infty$ (the one to the right of its root-edge) and we first peel all the edges of this face to reveal the hull of the dual ball of radius $1$. Then we iteratively peel all the edges which, \emph{at this moment} are on the boundary of the explored region to reveal the hull of the dual ball of radius $2$, etc. Note that at every step $n$, the faces incident to the boundary are either at a dual distance $H_n$ say, to the root-face, or at dual distance $H_n+1$. We deduce from Theorem~\ref{thm:volume_peeling} that for every $ \varepsilon>0$, there exist $0 < c_{ \varepsilon}< C_{ \varepsilon} < \infty$ such that for every $n$ large enough, we have
\begin{equation}\label{eq:inclusiondualprimal}
\hBall\left(\Map_\infty, c_{ \varepsilon} n^{\frac{1}{2(a-1)}}\right)
\subset \hBall\left(\Map_\infty^\dag, H_{n} \right)
\subset \hBall\left(\Map_\infty, C_{ \varepsilon} n^{\frac{1}{2(a-1)}}\right),
\end{equation}
with probability at least $1 - \varepsilon$. 
Now, depending on the value $ a \in (3/2;5/2]$ we know the asymptotic behaviour of $H_n$: for $a \in (2;5/2]$, by~\cite[Theorem~4.2]{Budd-Curien:Geometry_of_infinite_planar_maps_with_high_degrees} we have $H_{n} \approx n^{\frac{a-2}{a-1}}$, for $a \in (3/2;2)$, combining~\cite[Theorem~5.3]{Budd-Curien:Geometry_of_infinite_planar_maps_with_high_degrees} and~\cite[Lemma~5.8]{Budd-Curien:Geometry_of_infinite_planar_maps_with_high_degrees}, the ratio $H_n / \log n$ converges in probability to some constant $C_a > 0$, and finally for $a=2$, according to~\cite[Proposition~4]{Budd-Curien-Marzouk:Infinite_random_planar_maps_related_to_Cauchy_processes}, the ratio $H_n / \log^{2} n$ converges in probability to $(2 \pi^{2})^{-1}$. For some $0< c'_{ \varepsilon}<C'_{ \varepsilon} < \infty$ we thus have with probability at least $1- \varepsilon$ when $n$ is large enough,
\[\begin{array}{rccclcl}
c'_{  \varepsilon} n^{ \frac{a-2}{a-1}} &\leq&  H_{n} &\leq& C'_{  \varepsilon} n^{ \frac{a-2}{a-1}}&  \mbox{when}& a \in (2; 5/2],
\\
(1- \varepsilon) C_a \log n &\leq&  H_{n} &\leq&(1+ \varepsilon) C_a \log n & \mbox{when} &a \in (3/2;2),
\\
\frac{1- \varepsilon}{2 \pi^2} \log^2 n &\leq&  H_{n} &\leq& \frac{1+ \varepsilon}{2 \pi^2} \log^2 n & \mbox{when}& a =2.
\end{array}\]
Corollary~\ref{cor:volume_primal_dual} then follows by combining these bounds with~\eqref{eq:inclusiondualprimal} and using monotonicity properties. \end{proof}

\subsection{Pioneer points and sub-diffusivity}\label{sec:pionniers}

\paragraph{Walk on $ \Map_{\infty}$}
Let $X = (X_n)_{n \ge 0}$ be the simple random walk on $\Map_\infty$ started from the origin $\rho$, which should be viewed as a sequence $(\vec{E}_n)_{n \ge 0}$ of oriented edges such that $\vec{E}_0$ is the root-edge, and for every $n \ge 0$, conditional on the edge $\vec{E}_n$, we choose one of the edges incident to the tip of $\vec{E}_n$ uniformly at random, then $\vec{E}_{n+1}$ is this new edge, oriented from the tip of $\vec{E}_n$. Then $X_n$ is the origin of the edge $\vec{E}_n$. We say that $\tau \ge 0$ is a \emph{pioneer time} if $X_\tau$ lies on the boundary of the unique infinite component when we remove all the faces incident to one of the $X_i$'s for $i < \tau$; then $X_\tau$ is called a \emph{pioneer point} (so $X_0 = \rho$ is a pioneer point). For every $n \ge 0$, we let $\mathcal{P}_n$ be the $n$-th pioneer point.

\begin{cor}%[Pioneer points and sub-diffusivity]
\label{cor:points_pionniers_sous_diff}
Fix a critical weight sequence $\q$ of type $a \in (\frac{3}{2}, \frac{5}{2}]$. We have
\[ \sup_{1 \le k \le n} \dgr(\rho, \mathcal{P}_k) \approx  n^{\frac{1}{2(a-1)}}
\qquad\text{and}\qquad
\sup_{0 \le k \le n} \dgr(\rho, X_k) \lesssim n^{\frac{1}{2(a-1)}}.\]
\end{cor}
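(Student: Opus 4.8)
The plan is to design a peeling algorithm $\mathcal{S}_{\mathrm{primal}}$ that follows the simple random walk $X$ on $\Map_\infty$, and then read off the corollary from Theorem~\ref{thm:volume_peeling} together with Proposition~\ref{prop:volume_peeling}. Concretely, the algorithm is the standard ``peeling along the walk'': at each step, if the current position $X_k$ of the walk lies on the boundary $\partial\e_m$ of the currently explored map (equivalently, $X_k$ is on the boundary of the infinite hole, i.e. $k$ is a pioneer time), we peel an edge of $\partial\e_m$ incident to $X_k$; this reveals one new face, and in the bipartite case a bounded number of peeling steps per pioneer time suffices to expose all edges incident to $X_k$ so that the walk can take its next step inside the known region. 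In between pioneer times, the walk moves inside the already-revealed faces and no peeling occurs. The key bookkeeping fact is therefore: \emph{up to a bounded multiplicative constant, the number of peeling steps performed when the walk has made $n$ pioneer steps is comparable to $n$}, and moreover $\e_{\text{(that many steps)}}$ contains exactly the faces incident to $X_0,\dots,X_{(\text{$n$-th pioneer time})}$ together with the finite regions they cut off — which is precisely the hull generated by the first $n$ pioneer points.

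The first statement now follows directly. Write $m(n)$ for the number of peeling steps of $\mathcal{S}_{\mathrm{primal}}$ after the $n$-th pioneer point, so $m(n) \approx n$. By construction every pioneer point $\mathcal{P}_k$ with $k \le n$ lies on $\partial\e_{m(k)} \subset \e_{m(n)}$, hence by Theorem~\ref{thm:volume_peeling},
\[
\{\mathcal{P}_1,\dots,\mathcal{P}_n\} \subset \e\vphantom{e}^{\mathcal{S}_{\mathrm{primal}}}_{m(n)}
\subset \hBall\!\left(\Map_\infty, C_\varepsilon\, m(n)^{\frac{1}{2(a-1)}}\right)
\]
with probability at least $1-\varepsilon$, which together with $m(n)\approx n$ gives $\sup_{1\le k\le n}\dgr(\rho,\mathcal{P}_k) \lesssim n^{\frac{1}{2(a-1)}}$. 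For the matching lower bound, the last pioneer point $\mathcal{P}_n$ sits on $\partial\e_{m(n)}$, so $\dgr(\rho,\mathcal{P}_n) \ge D^-_{m(n)} \gtrsim m(n)^{\frac{1}{2(a-1)}} \approx n^{\frac{1}{2(a-1)}}$, using the lower-bound half of Theorem~\ref{thm:volume_peeling} (the bound~\eqref{eq:bornes_faciles_peeling}); hence $\sup_{1\le k\le n}\dgr(\rho,\mathcal{P}_k)\gtrsim n^{\frac{1}{2(a-1)}}$, and $\approx$ follows. For the second statement, any $X_k$ with $k\le n$ lies inside a face revealed at or before the $n$-th pioneer step (there are at most $n$ pioneer times among $0,\dots,n$), hence $X_k \in \e_{m(n)}$ up to adding one graph-distance unit for the face diameter; applying the upper inclusion of Theorem~\ref{thm:volume_peeling} again and $m(n)\le n$ (there is at most one peeling burst per time step, of bounded size) yields $\sup_{0\le k\le n}\dgr(\rho,X_k)\lesssim n^{\frac{1}{2(a-1)}}$.

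The main obstacle is the rigorous comparison $m(n)\approx n$ and, underlying it, the verification that peeling-along-the-walk is a legitimate \emph{Markovian} algorithm in the sense of the theorem, i.e. that the choice of peeled edge depends only on $\e_m$ and on an independent randomness source (here the increments of the walk), not on the unexplored map. One must check that when the walk steps onto a vertex of $\partial\e_m$ that has some incident edges not yet revealed, peeling those edges (finitely many, deterministically bounded by the current degree on the boundary, hence a.s. finite) indeed exposes the full neighbourhood of that vertex so the walk can continue; here one uses that $\Map_\infty$ is one-ended and that filled-in peeling never leaves a ``trapped'' finite piece. The bound $m(n)\lesssim n$ requires controlling the number of peeling steps per pioneer time: each pioneer visit triggers at most (degree of $X_k$ in $\partial\e$) new peelings, and one argues this is tight on average — or, more robustly, one simply notes $m(n)$ is at most the number of \emph{edges} incident to $\{\mathcal{P}_1,\dots,\mathcal{P}_n\}$ plus filled-in contributions, which is $\lesssim n^{\frac{1}{2(a-1)}}$-controlled by the volume estimates and in any case grows at most linearly in the number of walk steps. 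This is the same argument as in~\cite{Benjamini-Curien:Simple_random_walk_on_the_uniform_infinite_planar_quadrangulation_subdiffusivity_via_pioneer_points}; the novelty here is only that Theorem~\ref{thm:volume_peeling} makes it algorithm-agnostic, so once $\mathcal{S}_{\mathrm{primal}}$ is set up the estimates are immediate.
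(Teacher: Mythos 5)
Your overall strategy matches the paper's: use the peeling-along-the-walk algorithm $\mathcal{S}_{\mathrm{primal}}$, show the number of peeling steps is comparable to the number of pioneer points, and then invoke Theorem~\ref{thm:volume_peeling}. However, there are two concrete gaps in the write-up.

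First, and more seriously, you pass directly from the inclusion $\{\mathcal{P}_1,\dots,\mathcal{P}_n\}\subset\hBall(\Map_\infty, C_\varepsilon m(n)^{1/(2(a-1))})$ to the bound $\sup_k\dgr(\rho,\mathcal{P}_k)\lesssim n^{1/(2(a-1))}$, as though membership in $\hBall(\Map_\infty,r)$ automatically forces distance $\le r$ from $\rho$. It does not: the hull $\hBall(\Map_\infty,r)$ is built from faces having \emph{some} vertex within distance $r$, plus filled-in finite regions, and can in principle have ``tentacles'' reaching far beyond radius $r$. One needs the third item of Proposition~\ref{prop:volume_boules}, namely $\max\{\dgr(\rho,u):u\in\hBall(\Map_\infty,r)\}\approx r$, to convert containment in the hull into a distance bound. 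The paper uses this explicitly, and indeed points out afterward that the analogous corollary on the dual map only gives a one-sided statement precisely because this tentacle estimate is \emph{not} available there — so this is not a cosmetic omission but a load-bearing ingredient.

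Second, your justification of $m(n)\approx n$ is partly incorrect as stated. You claim that ``a bounded number of peeling steps per pioneer time suffices'' and later that each peeling burst is ``of bounded size''. This is false deterministically: a run of $\mathsf{C}_k$ events can keep $X_k$ on the boundary indefinitely. The correct argument (the one in the paper and in Benjamini--Curien) is probabilistic: by the spatial Markov property, at each peeling step while $X_k$ is on a boundary of half-perimeter $p$ there is probability at least $\nu(-1)\,h^\uparrow(p-1)/h^\uparrow(p)$ of a $\mathsf{G}_{*,0}$ event swallowing $X_k$, and $\inf_{p\ge 2}\nu(-1)h^\uparrow(p-1)/h^\uparrow(p)>0$, so the number of peeling steps per pioneer point is stochastically dominated by a geometric variable, whence $m(n)\approx n$ in the $\lesssim/\gtrsim$ sense used here. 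Your ``more robust'' alternative — counting edges incident to the pioneer points — does not give a linear bound either, since degrees on the boundary are unbounded. Finally, note that ``$m(n)\le n$'' has the inequality the wrong way around: trivially $m(n)\ge n$ since each pioneer point requires at least one peeling step; the nontrivial direction is the upper bound $m(n)\lesssim n$ via geometric domination.
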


\begin{proof}[Proof of Corollary~\ref{cor:points_pionniers_sous_diff}]
We use the peeling algorithm $\mathcal{S}_{ \mathrm{primal}}$ defined in~\cite[Section~1.4]{Benjamini-Curien:Simple_random_walk_on_the_uniform_infinite_planar_quadrangulation_subdiffusivity_via_pioneer_points} which follows the walk $X$: if $X_{n}$ is not a pioneer point, it lies in the interior of the submap we have revealed so far and we may directly move to $X_{n+1}$. If otherwise $X_{n}$ is a pioneer point, then it lies on the boundary of the explored part and we peel the edge on the boundary which lies immediately to the left of $X_{n}$. We continue to do so until $X_{n}$ does not belong to the boundary of the explored part: The $1$-neighborhood of $X_{n}$ has then been completely explored and we may  perform a random walk step.

Let us denote by $(\e_n)_{n \ge 0}$ the associated filled-in peeling process and write $\theta_{n}$ for the number of pioneer points we have encountered in the first $n$ peeling steps. Since we only peel when the walk is at a pioneer point we have $\theta_{n} \leq n+1$. On the other hand, if $X_{n}$ is on the boundary of the explored part of perimeter say $2 p$, there is a probability at least $ \nu(-1)h^{\uparrow}(p-1)/h^{\uparrow}(p)$ that $X_{n}$ is swallowed by a peeling step of type $ \mathsf{G}_{*,0}$ and is not exposed on the boundary of the explored part anymore. If $p=1$ the point $X_{n}$ can be swallowed in two peeling steps. Since $\inf_{ p \geq 2}\nu(-1)h^{\uparrow}(p-1)/h^{\uparrow}(p) >c >0$ we see that the time it takes to discover the neighbourhood of a given pioneer point is stochastically dominated by a geometric random variable. It easily follows that
\[\theta_{n} \approx n.\]
Using this and Theorem~\ref{thm:volume_peeling} it follows that the first $n$ pioneers points of the walk, and a fortiori the first $n$ steps of the walk, take place within $ \hBall(\Map_\infty, C_{ \varepsilon} n^{\frac{1}{2(a-1)}})$ with probability at least $1- \varepsilon$. Using  the third item of Proposition~\ref{prop:volume_boules} it follows that 
\[ \sup_{1 \le k \le n} \dgr(\rho, \mathcal{P}_k) \lesssim  n^{\frac{1}{2(a-1)}}
\qquad\text{and}\qquad
\sup_{0 \le k \le n} \dgr(\rho, X_k) \lesssim n^{\frac{1}{2(a-1)}}.\]
For the lower bound $\sup_{1 \le k \le n} \dgr(\rho, \mathcal{P}_k) \gtrsim  n^{\frac{1}{2(a-1)}}$ notice that the $n$-th pioneer point is necessarily outside $ \e_{n-1}$ and so by Theorem~\ref{thm:volume_peeling} it must be at distance at least $ c_{ \varepsilon} n^{ \frac{1}{2(a-1)}}$ from the origin of the map with probability at least $ 1- \varepsilon$.\end{proof}

\paragraph{Walk on $ \Map_{\infty}^{\dagger}$}
We can use the same strategy as in the last section to study the random walk on $\Map_{\infty}^{\dagger}$. More precisely, let us denote by $X^\dag = (X^\dag_n)_{n \ge 0}$ the simple random walk on $\Map_\infty^\dag$ started from the root-face $\rootface$. As before, one can design an algorithm $\mathcal{S}_{ \mathrm{dual}}$ that explores the map along the walk (see~\cite[Section 8.2.2]{Curien:Peccot}). The latter is simpler than $ \mathcal{S}_{ \mathrm{primal}}$: the walk starts from the root-face $\rootface$ and wants to traverse one of the edges of this face, we then peel this edge and reveal the face on the other side before moving to $X^\dag_1$. Then we continue like this: at each step, either the walk wants to traverse an edge such that the other side has not been revealed yet, in which case we peel this edge, or the other side is known and the walk can directly move. We call \emph{pioneer edges} the edges traversed by the walk that lead to the triggering of a peeling step; for all $k \ge 0$, let us denote by $\mathcal{P}^\dag_k$ the $k$-th pioneer edge. Finally, we denote by $\dgr^\dag$ the graph distance in $\Map_\infty^\dag$.

\begin{cor}\label{cor:points_pionniers_sous_diff_dual}
Fix a critical weight sequence $\q$ of type $a \in (\frac{3}{2}, \frac{5}{2}]$; there exists $c_a > 0$ such that the following holds: For every $ \varepsilon>0$, there exist $0 < c_{ \varepsilon} < C_{ \varepsilon}< \infty$ such that for every $r$ large enough, we have 
\[\begin{array}{rccclcl}
c_{  \varepsilon} n^{ \frac{a-2}{a-1}} &\leq&  \displaystyle \max_{1 \le k \le n} \dgr^\dag(\rootface, \mathcal{P}^\dag_k) &\leq& C_{  \varepsilon} n^{ \frac{a-2}{a-1}}&  \mbox{when}& a \in (2, 5/2],
\\
(1- \varepsilon) c_a \log n &\leq&  \displaystyle \max_{1 \le k \le n} \dgr^\dag(\rootface, \mathcal{P}^\dag_k) &\leq&(1+ \varepsilon) c_a \log n & \mbox{when} &a \in (3/2,2),
\\
\frac{1- \varepsilon}{2 \pi^2} \log^2 n &\leq&  \displaystyle \max_{1 \le k \le n} \dgr^\dag(\rootface, \mathcal{P}^\dag_k) &\leq& \frac{1+ \varepsilon}{2 \pi^2} \log^2 n & \mbox{when}& a =2.
\end{array}\]
with probability at least $1 - \varepsilon$.
\end{cor}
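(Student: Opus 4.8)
My approach would follow the proof of Corollary~\ref{cor:points_pionniers_sous_diff} transported to the dual, the extra ingredient being the primal/dual ball comparison underlying Corollary~\ref{cor:volume_primal_dual}. Let $(\e_n)_{n\ge 0}$ be the filled-in peeling process associated with the walk-following algorithm $\mathcal{S}_{\mathrm{dual}}$. By construction one peeling step is performed each time the walk $X^\dag$ crosses a pioneer edge, and none is performed otherwise, so after $n$ peeling steps exactly $\mathcal{P}^\dag_1,\dots,\mathcal{P}^\dag_n$ have been crossed and $\e_n$ is the region then explored; since $\Map_\infty$, hence its dual, is recurrent, the $n$-th pioneer edge is a.s.\ well defined for all $n$, and $\e_n$ obeys all the estimates of the previous sections.

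I would first apply Theorem~\ref{thm:volume_peeling} both to $\mathcal{S}_{\mathrm{dual}}$ and to the dual-metric algorithm $\mathcal{L}_{\mathrm{dual}}$, whose explored region after $m$ steps is exactly $\hBall(\Map_\infty^\dag, H_m)$. As both explorations are squeezed between $\hBall(\Map_\infty, c_\varepsilon n^{1/(2(a-1))})$ and $\hBall(\Map_\infty, C_\varepsilon n^{1/(2(a-1))})$, setting $\kappa_\varepsilon := (C_\varepsilon/c_\varepsilon)^{2(a-1)}$ yields, with probability at least $1-\varepsilon$ for $n$ large,
\[\hBall\bigl(\Map_\infty^\dag, H_{\lfloor n/\kappa_\varepsilon\rfloor}\bigr) \subset \e_n \subset \hBall\bigl(\Map_\infty^\dag, H_{\lceil \kappa_\varepsilon n\rceil}\bigr).\]
Recalling the asymptotics of $H_m$ used in the proof of Corollary~\ref{cor:volume_primal_dual} — namely $H_m \approx m^{(a-2)/(a-1)}$ for $a\in(2,5/2]$, $H_m/\log m \to C_a$ in probability for $a\in(3/2,2)$, and $H_m/\log^2 m \to (2\pi^2)^{-1}$ in probability for $a=2$ — and that replacing $m$ by $\kappa_\varepsilon^{\pm 1}m$ changes these quantities only by a factor $1+O(\varepsilon)$, the inclusion above confines $\e_n$ between two dual hulls whose radii already display the asymptotics claimed in the statement.

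It then remains to read off $\max_{1\le k\le n}\dgr^\dag(\rootface,\mathcal{P}^\dag_k)$. For the lower bound, a peeling step of type $\mathsf{C}$ reveals a face lying outside the current explored region, and the $n$-th step is of type $\mathsf{C}$ exactly when $\Delta|\partial\e_n|\ge 0$, an event of conditional probability bounded away from $0$ as soon as $|\partial\e_n|$ is large; hence, with probability at least $1-\varepsilon$, some step $k_0\in\{n-m,\dots,n\}$ (with $m = m(\varepsilon)$ fixed) is of type $\mathsf{C}$, and the face it reveals sits outside $\e_{k_0-1}\supset\hBall(\Map_\infty^\dag, H_{\lfloor (k_0-1)/\kappa_\varepsilon\rfloor})$, so at dual distance at least $H_{\lfloor (k_0-1)/\kappa_\varepsilon\rfloor}$ from $\rootface$; since $\mathcal{P}^\dag_{k_0}$ is incident to that face, $\dgr^\dag(\rootface,\mathcal{P}^\dag_{k_0})\ge H_{\lfloor (k_0-1)/\kappa_\varepsilon\rfloor}-1$, which is at least $(1-\varepsilon)$ times the announced quantity for $n$ large. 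For the upper bound, every $\mathcal{P}^\dag_k$ with $k\le n$ is incident to the face the walk occupied just before crossing it, which lies in $\e_{k-1}\subset\e_n\subset\hBall(\Map_\infty^\dag, H_{\lceil \kappa_\varepsilon n\rceil})$, so $\dgr^\dag(\rootface,\mathcal{P}^\dag_k)$ is bounded by the dual radius of this hull.

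The hard part is this last bound: one needs that the dual radius of $\hBall(\Map_\infty^\dag, r)$ does not exceed $(1+o(1))r$, i.e.\ that the finite regions cut off by the dual ball $\Ball(\Map_\infty^\dag, r)$ have dual diameter $o(r)$. In the dilute regime $a\in(2,5/2]$ the statement only asserts the order of magnitude, so the weaker fact that this dual radius is $\Theta(r)$ — a dual analogue of the third estimate of Proposition~\ref{prop:volume_boules} — suffices; in the regimes $a\le 2$ the sharp constants force one either to establish the stronger diameter bound (via the scaling limits of the perimeter and volume of~\cite{Budd-Curien:Geometry_of_infinite_planar_maps_with_high_degrees,Budd-Curien-Marzouk:Infinite_random_planar_maps_related_to_Cauchy_processes}) or to argue directly that a pioneer edge, being a cut edge between $\e_{k-1}$ and its one-ended complement, is always incident to a face of $\Ball(\Map_\infty^\dag, H_{\lceil \kappa_\varepsilon n\rceil})$ rather than of a region it encloses. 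Granting this, a union bound over the finitely many events involved combines the two bounds into the three displayed inequalities.
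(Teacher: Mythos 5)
Your proposal follows a different path from the paper's, and it leaves a gap that you yourself flag. The paper's argument is much shorter: since the pioneer edge $\mathcal{P}^\dag_k$ lies on the (nested, growing) boundary $\partial\e_{k-1}$, one has the sandwich
\[
\min_{f\in\partial\e_n}\dgr^\dag(\rootface,f)
\;\le\;
\dgr^\dag(\rootface,\mathcal{P}^\dag_n)
\;\le\;
\max_{1\le k\le n}\dgr^\dag(\rootface,\mathcal{P}^\dag_k)
\;\le\;
\max_{f\in\partial\e_n}\dgr^\dag(\rootface,f),
\]
and then Theorem~\ref{thm:volume_peeling} (all vertices of $\partial\e_n$ are at primal distance $\approx n^{1/(2a-2)}$) combined with Corollary~\ref{cor:volume_primal_dual} converts these into the stated dual-distance asymptotics. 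You instead re-derive the primal-to-dual translation by squeezing $\e_n$ between $\mathcal{L}_{\mathrm{dual}}$-explorations (which is fine, and in fact is what Corollary~\ref{cor:volume_primal_dual}'s proof does), then argue the lower bound via the occurrence of a $\mathsf{C}$-step in a window of fixed length; that works, but it is a detour — \emph{every} pioneer edge lies on the active boundary $\partial\e_{k-1}$, hence its ``outer'' side is a face of the infinite unexplored region and thus outside $\hBall(\Map_\infty^\dag, H_{\lfloor(k-1)/\kappa_\varepsilon\rfloor})$, so one does not need to single out $\mathsf{C}$-steps.

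The genuine problem is your upper bound. You want to bound $\dgr^\dag(\rootface,\mathcal{P}^\dag_k)$ by ``the dual radius of $\hBall(\Map_\infty^\dag, H_{\lceil\kappa_\varepsilon n\rceil})$'' and then observe that this requires a tentacle estimate for the dual hull, which is precisely the estimate that the paper explicitly states it does \emph{not} have (see the final remark of Section~\ref{sec:pionniers}). So as written the step is a gap, not merely a hard lemma to cite. The fix you mention in passing — that a pioneer edge, being a cut edge between $\e_{k-1}$ and its one-ended complement, is incident to a face near the active boundary rather than deep inside a swallowed region — is exactly the right idea, and it is in fact how the paper's route closes the argument: the max over $k$ of the pioneer-edge dual distance is controlled by $\max_{f\in\partial\e_n}\dgr^\dag(\rootface,f)$, i.e.\ by faces on the current active boundary, which the nested $\mathcal{L}_{\mathrm{dual}}$-sandwich places between two consecutive $\mathcal{L}$-layers, hence at dual distance of the announced order. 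You should carry out this argument rather than leave it as a parenthetical alternative; without it, the $a\le 2$ cases (with their sharp constants) and even the $a>2$ case are not established.
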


\begin{proof}%[Proof of Corollary~\ref{cor:points_pionniers_sous_diff_dual}]
Let $ (\e_{n})_{n \ge 0}$ be the filled-in peeling process associated with $\mathcal{S}_{ \mathrm{dual}}$. Using the fact that the submaps $(\e_{k})_{0 \leq k \leq n}$ are nested, a moment's though shows that 
\[\min_{f \in \partial \e_n} \dgr^\dag(\rootface, f)
\leq  \dgr^\dag(\rootface, \mathcal{P}^\dag_n)
\leq \boldsymbol{\max_{1 \le k \le n} \dgr^\dag(\rootface, \mathcal{P}^\dag_k)}
\leq \max_{1 \le k \le n} \max_{f \in \partial \e_k} \dgr^\dag(\rootface, f)
= \max_{f \in \partial \e_n} \dgr^\dag(\rootface, f),\]
where by $ f \in \partial \e_{n}$ we mean a face incident to the boundary $\partial \e_n$. By Theorem~\ref{thm:volume_peeling}, the smallest and the largest \emph{primal} graph distance to the root-vertex of the boundary $\partial \e_{n}$ are both of order $n^{1/(2a-2)}$, we then conclude from Corollary~\ref{cor:volume_primal_dual} that $\min_{f \in \partial \e_n} \dgr^\dag(\rootface, f)$ and $\max_{f \in \partial \e_n} \dgr^\dag(\rootface, f)$ satisfy respectively the lower and upper bounds of our claim.
\end{proof}

We point out that, as opposed to Corollary~\ref{cor:points_pionniers_sous_diff}, this result does not imply upper bounds for the quantities $\max_{1 \le k \le n} \dgr^\dag(\rootface, X^\dag_k)$ because we do not have the last claim of Proposition~\ref{prop:volume_boules} for the dual map: We do get that the walk $X^\dag$ up to time $n$ stays within a hull $\hBall(\Map_\infty^\dag, r_n)$ for some $r_n$ given by Corollary~\ref{cor:points_pionniers_sous_diff_dual}, but this hull may have ``tentacles'' reaching distance much larger than $r_{n}$ (at least in the dense regime, but it should not be the case in the dilute regime).

\section{Maps as labelled trees and geometric estimates}
\label{sec:arbres_cartes}

In this section, we recall the other very efficient tool to study planar maps which is a construction from \emph{labelled mobiles} originally due to Bouttier, Di Francesco, and Guitter; let us first define these objects before recalling the construction (we refer the reader to~\cite[Section~6]{Bettinelli-Miermont:Compact_Brownian_surfaces_I_Brownian_disks} for details).

\subsection{Bouttier--Di Francesco--Guitter coding of bipartite maps}
\label{sec:BDG}

\paragraph{Finite maps.}
Let us set $\Z_{\ge -1} = \{-1, 0, 1, 2, \dots\}$ and for every $k \ge 1$, consider the following set of bridges:
\[\mathcal{B}_k^{\ge -1} = \left\{(b_0, \dots, b_k): b_0=b_k=0 \text{ and } b_j-b_{j-1} \in \Z_{\ge -1} \text{ for } 1 \le j \le k\right\}.\]
A \emph{mobile} is a finite rooted plane tree whose vertices at even (resp. odd) generations are white (resp. black). We consider \emph{labelled} mobiles, in which every white vertex $u$ carries a label $\ell(u) \in \Z$. We say that a finite ordered forest of mobiles $(\mathfrak{t}_1,\dots,\mathfrak{t}_p)$ is \emph{well-labelled} if
\begin{enumerate}
\item The sequence of labels of the roots of $\mathfrak{t}_1,\dots,\mathfrak{t}_p, \mathfrak{t}_1$ belongs to $\mathcal{B}_{p+1}^{\ge -1}$;
\item For every black vertex $u$, if $u_0$ denotes its white parent and $u_1, \dots, u_k$ its white children, ordered from left to right, then the sequence of labels $(\ell(u_0), \ell(u_1), \dots, \ell(u_k), \ell(u_0))$ belongs to $\mathcal{B}_{k+1}^{\ge -1}$. 
\end{enumerate}

Imagine that the forest $\mathfrak{t}_1, \dots, \mathfrak{t}_p$ is properly drawn on the plane inside a cycle of length $p$, with the roots grafted in counter-clockwise order on the cycle. Let us define the \emph{white contour sequence} $(c^\circ_n)_{n \ge 0}$ as the sequence of corners formed by the white vertices, starting from $c^\circ_0$ the corner between the root of $\mathfrak{t}_1$ and its first black child (if any, otherwise the corner formed by this root only), and following the contour of the forest from left to right cyclically. Recall that the white vertices are labelled, we associate with each white corner the label of the corresponding vertex; we then say that a corner $c^\circ_j$ is the \emph{successor} of another corner $c^\circ_i$ if $c^{\circ}_{j}$ is the first corner in the cyclic contour after $c^{\circ}_{i}$ such that $\ell(c^\circ_j) = \ell(c^\circ_i) - 1$. For this definition to hold also when $\ell(c^\circ_i) = \ell_{\min}$ is the overall minimum of labels, we add an extra vertex $u_\star$ labelled $\ell_{\min}-1$. 

We associate a \emph{pointed} planar map ---~i.e. a map with a distinguished vertex~--- with such a well-labelled forest of mobiles by drawing the links between each corner and its successor in a non-crossing fashion and then erasing the embedding of the cycle and the edges of the mobiles; we are then left with a bipartite map on the set of white vertices of the forest and the distinguished vertex $u_\star$, with a black vertex inside each inner face, and the degree of this vertex in its mobile is half the degree of the face in the map. The external face of the map is the face that ``encloses'' the cycle on which the mobiles have been grafted. The root-edge of the map is not prescribed by the forest and is taken uniformly at random on the external face of degree $2p$ (oriented so that the external face is on its right). 
The labelling of the above forest has a geometric interpretation in terms of the map: the label of a vertex minus $\ell_{\min}$ plus one is the graph distance in the map to the distinguished vertex $u_\star$. 
See Figure~\ref{fig:bdg} for an illustration.

\begin{figure}[!ht]\centering
\includegraphics[width=.85\linewidth]{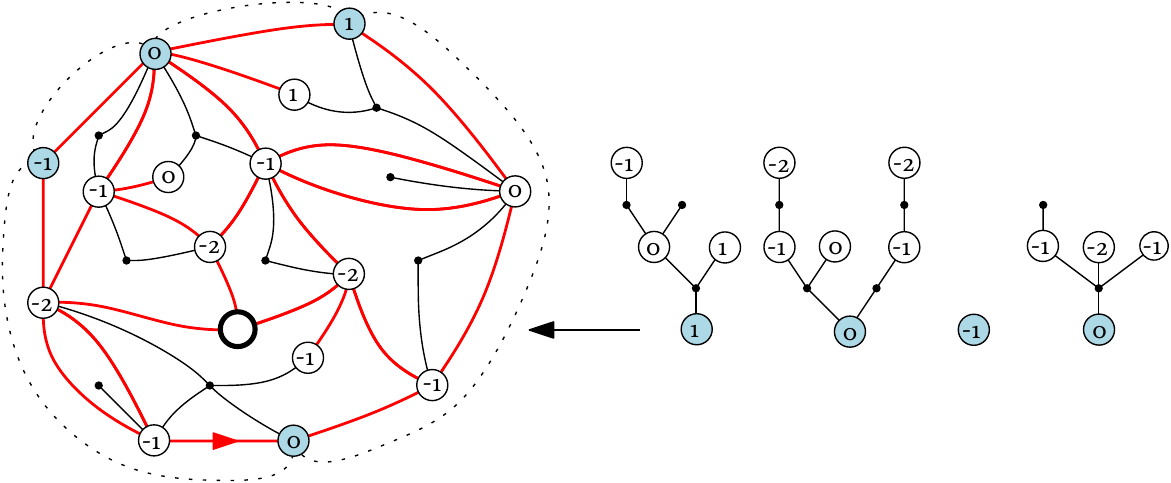}
\caption{Illustration of the construction of a pointed (at the white unlabelled vertex) bipartite planar map with a boundary of perimeter $8$ (in red) from a forest of $4$ mobiles. Note that the boundary is simple here, which may not be the case in general.}
\label{fig:bdg}
\end{figure}

\paragraph{Infinite maps.}
Let us next briefly extend the preceding construction to infinite maps with boundary-length $2p$. We start as above with a forest of mobiles $(\mathfrak{t}_1,\dots,\mathfrak{t}_p)$, where the $\mathfrak{t}_i$'s for $i \ge 2$ are as above, but $\mathfrak{t}_1$ is now an infinite mobile with one end, in the sense that it is locally finite and there is a unique self-avoiding infinite path, called thereafter the \emph{spine}, so the tree can be constructed from such a half-line of alternating white and black vertices $s^\circ_0, s^\bullet_0, s^\circ_1, s^\bullet_1, s^\circ_2, s^\bullet_2, \dots$ on which are grafted finite trees. This spine splits the forest into two parts: the one on its left made of all the trees grafted to the left of the spine, and the one on the right made of all the trees grafted to the right of the spine and the other $\mathfrak{t}_i$'s. Then we may define a white contour sequence as a process indexed by $\Z$: define $(c^\circ_n)_{n \ge 0}$ following the contour of $\mathfrak{t}_1$ starting as above from $c^\circ_0$ the corner between the root of $\mathfrak{t}_1$ and its first black child; on the other hand define $(c^\circ_{-n})_{n \ge 1}$ following the contour of the forest but now from right to left: $c^\circ_{-1}$ is the corner between the root of $\mathfrak{t}_p$ and its last black child and then we visit all the white corners of the $\mathfrak{t}_i$'s for $i \ge 2$ before reaching $\mathfrak{t}_1$ and following the part to the right of the spine.

As before we consider labels on the white vertices and we say that the forest is well-labelled when the labels satisfy the same local rule as in the finite case and furthermore the set of labels on the spine is not bounded below. We then construct a map as previously, by first imagining that the trees are properly drawn inside a cycle of length $p$ with a unique accumulation point (corresponding to the infinite tree) and then linking every white corner to its successor in a non-crossing fashion. Our assumption ensures that this is always possible, so there is no need to add any extra vertex here (the distinguished vertex is ``sent to infinity''). The root-edge is chosen uniformly at random on the external face as above. We refer to~\cite[Section~6.3]{Stephenson:Local_convergence_of_large_critical_multi_type_Galton_Watson_trees_and_applications_to_random_maps} and~\cite[Section 2]{Bjornberg-Stefansson:Recurrence_of_bipartite_planar_maps}  for this construction in the case $p=1$.

\subsection{The distribution of random labelled mobiles}
\label{sec:GW_bi_types}

A way to generate random  bipartite pointed Boltzmann planar maps consists in constructing it as previously, starting from a random forest.  Let $\T$ be an \emph{alternating two-type Bienaym\'{e}--Galton--Watson tree}: it has white and black vertices at even and odd generation respectively, which reproduce independently according to the following offspring distributions:
\[\mu_\circ(k) = Z_\q^{-1} (1 - Z_\q^{-1})^k
\qquad\text{and}\qquad
\mu_\bullet(k) = \frac{Z_\q^{k+1} \binom{2k+1}{k} q_{k+1}}{Z_q-1}\]
for all $k \ge 0$. Recall the law $\mu$ from the introduction, note that for $k \ge 1$, the ratio $\mu(k) / \mu_\bullet(k-1)$ is constant, so $\mu_\bullet$ or $\mu$ has finite variance or belongs to the strict domain of attraction of an $(a-1)$-stable distribution if and only if the other satisfies the same property. Furthermore, easy calculations show that $\mu$ has mean one if and only if the product of the means of $\mu_\circ$ and $\mu_\bullet$ equals one, so the two-type tree is critical. A simple and useful observation is that the tree induced by the white vertices, given by keeping only these white vertices and linking each one to its white grand-parent, is a Bienaym\'{e}--Galton--Watson forest; we shall denote the offspring distribution by $\tilde{\mu}$, which slightly differs from $\mu$ but has the same ``stable behaviour'', see~\cite[Section~3.2]{Le_Gall-Miermont:Scaling_limits_of_random_planar_maps_with_large_faces}.

Let $\T_1, \dots, \T_p$ be i.id. copies of $\T$ and, conditionally on this forest, sample  labels uniformly at random amongst all possibilities which make the forest well-labelled; this just means that at every black vertex with, say, $k-1$ offsprings, the sequence of labels around it in clockwise order forms a uniformly random bridge in $\mathcal{B}_k^{\ge -1}$ shifted by the value of the label of its parent, independently of the rest, and similarly for the roots. The law of that bridge is the same as that of a random walk bridge of length $k$, with i.id. increments of law
\begin{equation}\label{eq:increments_bridge}
\Pr{\xi=k} = 2^{-2-k} \qquad\text{for } k \geq -1.
\end{equation}
Then~\cite[Proposition 22]{Bettinelli-Miermont:Compact_Brownian_surfaces_I_Brownian_disks} shows that the pointed map constructed as above from $\T_1, \dots, \T_p$ has the law of a $\q$-Boltzmann \emph{pointed} planar map with a boundary with length $2p$ which we denote by 
$ \Map^{(p)}_{\bullet}$, i.e. $ \P( \Map^{(p)}_{\bullet} = (\map, \bullet)) \propto w( \map)$. This is not quite the desired law $\Map^{(p)}$ but the latter can be obtained by a simple bias: for every non-negative function $f$ that depends on the map only (not of its pointing),
\begin{equation}\label{eq:sizebias}
\Es{f( \Map^{(p)})} =  \frac{1}{ \E[1/ | \Map^{(p)}_{\bullet}|]} \cdot \Es{f ( \Map^{(p)}_{\bullet}) \cdot \frac{1}{| \Map^{(p)}_{\bullet}|}},
\end{equation}
where $|\map|$ is the number of vertices of the map $\map$.  
 
 Similarly, if the random labelled forest $(\T_1, \dots, \T_p)$ has the same law as above, except that $\T_1$ is infinite  and has the law of $\Tree_\infty$, the two-type Bienaym\'{e}--Galton--Watson trees as above \emph{conditioned to survive} then the associated map has the law $\Map^{(p)}_{\infty}$ (this follows from~\cite[Proposition 22]{Bettinelli-Miermont:Compact_Brownian_surfaces_I_Brownian_disks} and the work~\cite{Stephenson:Local_convergence_of_large_critical_multi_type_Galton_Watson_trees_and_applications_to_random_maps}). The law of $\Tree_\infty$  may be constructed in the following way: all the vertices reproduce independently, the ones outside the spine reproduce according to their respective offspring distribution, and the ones on the spine $s^\circ_i, s^\bullet_i$ reproduce according the \emph{size-biased} versions of these laws; finally, the offspring of a vertex on the spine which belongs to the spine is chosen uniformly at random.

\subsection{Asymptotic estimates on labelled mobiles}
\label{sec:estimees_arbres_etiquetes}

Let us consider a sequence $(\T_n)_{n \ge 1}$ of i.id. well-labelled mobiles with the same distribution as $\T$, that we view as an ordered forest; the labels of the roots of the mobiles is zero and the rest of the labels of the mobiles is sampled uniformly at random as above. Let $S^\circ = (S^\circ_k)_{k \ge 0}$ and $L^\circ = (L^\circ_k)_{k \ge 0}$ be respectively the white \emph{{\L}ukasiewicz walk} and the \emph{label process} associated with this forest, constructed as follows: let us read the white vertices of the forest in depth-first search order, starting at $0$ from the root of the first tree, then put $S^\circ_0 = 0$ and for every $k \ge 0$, let the difference $S^\circ_{k+1} - S^\circ_k$ record the number of grand-children minus one of the $k$-th white vertex (so $S^\circ$ is nothing but a centred random walk with step distribution $\tilde{\mu}(\cdot + 1)$), and let $L^\circ_k$ denote the label of this $k$-th white vertex. According to Le Gall \& Miermont~\cite[Theorem~1]{Le_Gall-Miermont:Scaling_limits_of_random_planar_maps_with_large_faces}, for $a \in (\frac{3}{2}, \frac{5}{2})$, we have the convergence in distribution in the Skorohod space
\begin{equation}\label{eq:convergence_LGM}
\left(n^{-\frac{1}{a-1/2}} S^\circ_{\lfloor nt\rfloor}, n^{-\frac{1}{2a-1}} L^\circ_{\lfloor nt\rfloor}\right)_{t \ge 0}
\cvdist
(c_0 \mathcal{S}_t, \sqrt{2c_0} \mathcal{Z}_t)_{t \ge 0},
\end{equation}
where $c_0$ is some constant depending on $\tilde{\mu}$, where $\mathcal{S}$ is an $(a-1/2)$-stable L\'{e}vy process with no negative jump, and the process $\mathcal{Z}$ is the \emph{continuous distance process} constructed in~\cite{Le_Gall-Miermont:Scaling_limits_of_random_planar_maps_with_large_faces}. 

Let us say a few words about this process $\mathcal{Z}$. In the discrete setting, the label of a white vertex is the sum of the label increments along its ancestral line, between each white ancestor, $u$ say, and its grand-parent, $v$ say, and these increments are given by the value $B(k,j)$ of an independent uniformly random bridge with jumps in $\Z_{\ge -1}$ of length $k$ at time $j$, where $k$ is the degree of the black vertex between $u$ and $v$, and $j$ is the position of $u$ amongst its siblings. The ancestors of the $n$-th white vertex are given by those times $m \le n$ such that $S^\circ_m \le \min_{[m+1,n]} S^\circ$, and the values $k$ and $k-j$ associated with this ancestor are encoded in the {\L}ukasiewicz path: suppose for simplicity that $v$ has only one black child, then $k$ and $k-j$ are given respectively by $S^\circ_{m+1} - S^\circ_m+1$ and $\min_{[m+1,n]} S^\circ - S^\circ_m$. At the continuum level, the construction of $\mathcal{Z}$ is similar: conditional on $\mathcal{S}$, for every $t > 0$, the value of $\mathcal{Z}_t$ is given by the sum of independent Brownian bridges of length given by the jumps $\mathcal{S}_s - \mathcal{S}_{s-}$ and evaluated at times given by $\inf_{[s,t]} \mathcal{S} - \mathcal{S}_{s-}$, only for those times $s < t$ such that $\inf_{[s,t]} \mathcal{S} > \mathcal{S}_{s-}$. It is shown in~\cite{Le_Gall-Miermont:Scaling_limits_of_random_planar_maps_with_large_faces} that such a process is well-defined and admits a continuous version.

In the case $a = \frac{5}{2}$, the convergence~\eqref{eq:convergence_LGM} still holds, where $\mathcal{S}$ is now a Brownian motion and $\mathcal{Z}$ is the so-called \emph{head of the Brownian snake} driven by $ \mathcal{S}$, which can be viewed as a Brownian motion indexed by the Brownian forest encoded by $\mathcal{S}$; the argument may be adapted from~\cite{Marzouk:On_scaling_limits_of_planar_maps_with_stable_face_degrees} which considers size-conditioned trees with offspring distribution $\mu$ instead.

We next derive a version of~\eqref{eq:convergence_LGM} for the tree $\Tree_\infty$ conditioned to survive. Let us define similarly its {\L}ukasiewicz path $S^\infty$ and its label process $L^\infty$ by restricting to the white vertices on the left part of the tree. It is known that $S^\infty$ has the law of the random walk $S^\circ$ conditioned to always stay non-negative (see e.g.~\cite{Bertoin-Doney:On_conditioning_a_random_walk_to_stay_nonnegative}), which can be rigorously defined as the Doob $h$-transform using the harmonic function $h(n) = (n+1) \ind{n \ge 0}$. Similarly, the L\'{e}vy process $\mathcal{S}$ can be conditioned to stay positive via such an $h$-transform and we denote by $\mathcal{S}^\uparrow$ this process, see the introduction of~\cite{Caravenna-Chaumont:Invariance_principles_for_random_walks_conditioned_to_stay_positive} and references therein. One can finally adapt the construction of the process $\mathcal{Z}$ from $\mathcal{S}$ in~\cite{Le_Gall-Miermont:Scaling_limits_of_random_planar_maps_with_large_faces} to this setting and define a process $\mathcal{Z}^\uparrow$ from $\mathcal{S}^\uparrow$ when $a < \frac{5}{2}$; when $a = \frac{5}{2}$ the process $\mathcal{Z}^\uparrow$ is simply the head of the Brownian snake driven by a three-dimensional Bessel process (Brownian motion conditioned to stay positive).

\begin{prop}\label{prop:convergence_processus}
We have the convergence in distribution for the Skorokhod topology 
\[\left(n^{-\frac{1}{a-1/2}} S^\infty_{\lfloor nt\rfloor}, n^{-\frac{1}{2a-1}} L^\infty_{\lfloor nt\rfloor}\right)_{t \ge 0}
\cvdist
(c_0 \mathcal{S}^\uparrow_t, \sqrt{2c_0} \mathcal{Z}^\uparrow_t)_{t \ge 0},\]
\end{prop}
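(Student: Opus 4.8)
The plan is to deduce this from the unconditioned convergence~\eqref{eq:convergence_LGM} by exploiting the $h$-transform structure of $\Tree_\infty$ together with a local absolute continuity argument, and to handle the {\L}ukasiewicz coordinate by invoking the invariance principle for random walks conditioned to stay non-negative. Two structural observations underlie the argument. First, as recalled above, $S^\infty$ is the random walk $S^\circ$ Doob-transformed by $h(n) = (n+1)\ind{n \ge 0}$ to stay non-negative (following~\cite{Bertoin-Doney:On_conditioning_a_random_walk_to_stay_nonnegative}); hence the one-dimensional invariance principle $(n^{-1/(a-1/2)} S^\infty_{\lfloor nt\rfloor})_{t \ge 0} \cvdist (c_0 \mathcal{S}^\uparrow_t)_{t \ge 0}$ follows from the first coordinate of~\eqref{eq:convergence_LGM} via~\cite{Caravenna-Chaumont:Invariance_principles_for_random_walks_conditioned_to_stay_positive}. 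Second, the label process is obtained from the {\L}ukasiewicz path by the \emph{same} measurable recipe in the conditioned and the unconditioned cases: conditioning on survival modifies only the tree shape, not the independent uniform bridges attached to the black vertices, so $L^\circ = F(S^\circ, \mathbf{b})$ and $L^\infty = F(S^\infty, \mathbf{b})$ for one functional $F$ and one family $\mathbf{b}$ of bridges; likewise $\mathcal{Z}^\uparrow$ is built from $\mathcal{S}^\uparrow$ by the continuum analogue $\Phi$ of $F$ that produces $\mathcal{Z}$ from $\mathcal{S}$ in~\cite{Le_Gall-Miermont:Scaling_limits_of_random_planar_maps_with_large_faces}.

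Next I would fix $0 < \delta < T$ and localise. By the strong Markov property of the $h$-transformed walk at the $\lfloor n\delta\rfloor$-th white vertex together with the renewal identity characterising $h$, the law of $(S^\infty, L^\infty)$ on white indices in $(\lfloor n\delta\rfloor, \lfloor nT\rfloor]$, re-centred at that vertex and conditioned on $(S^\infty_{\lfloor n\delta\rfloor}, L^\infty_{\lfloor n\delta\rfloor})$, is absolutely continuous with respect to the (re-centred) unconditioned forest $(S^\circ, L^\circ)$, with Radon--Nikodym density proportional to $\tfrac{S^\infty_{\lfloor nT\rfloor}+1}{S^\infty_{\lfloor n\delta\rfloor}+1}\ind{\min_{(\lfloor n\delta\rfloor,\lfloor nT\rfloor]} S^\infty \ge 0}$ (here one uses spatial homogeneity of the {\L}ukasiewicz increments and the fact that labels are read relatively along ancestral lines, the reconnection of fresh ancestral lines to the already-revealed ones contributing only a shift measurable with respect to the conditioning). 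Since $(n^{-1/(a-1/2)} S^\infty_{\lfloor n\delta\rfloor}, n^{-1/(a-1/2)} S^\infty_{\lfloor nT\rfloor})$ converges to $(c_0 \mathcal{S}^\uparrow_\delta, c_0 \mathcal{S}^\uparrow_T)$ with $\mathcal{S}^\uparrow_\delta > 0$ almost surely, this density is tight and converges by the continuous mapping theorem. Combined with~\eqref{eq:convergence_LGM}, this yields the joint convergence $(n^{-1/(a-1/2)} S^\infty_{\lfloor nt\rfloor}, n^{-1/(2a-1)} L^\infty_{\lfloor nt\rfloor})_{t \in [\delta, T]} \cvdist (c_0 \mathcal{S}^\uparrow_t, \sqrt{2 c_0}\, \mathcal{Z}^\uparrow_t)_{t \in [\delta, T]}$, provided one checks that the continuity and tightness estimates of~\cite{Le_Gall-Miermont:Scaling_limits_of_random_planar_maps_with_large_faces} for the map $\mathcal{S} \mapsto \mathcal{Z}$ remain valid for $\mathcal{S}^\uparrow$; this in turn follows from the local absolute continuity of $\mathcal{S}^\uparrow$ with respect to $\mathcal{S}$ on $[\delta, \infty)$ (same L\'evy measure, no fixed discontinuities), which also legitimises the very definition of $\mathcal{Z}^\uparrow$.

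Finally I would let $\delta \to 0$. It remains to show that $\sup_{t \le \delta} n^{-1/(a-1/2)} |S^\infty_{\lfloor nt\rfloor}|$ and $\sup_{t \le \delta} n^{-1/(2a-1)} |L^\infty_{\lfloor nt\rfloor}|$ are small uniformly in $n$ as $\delta \to 0$: for $S^\infty$ this is classical fluctuation theory for conditioned walks, and for $L^\infty$ it follows from a Kolmogorov-type moment estimate on label increments combined with the control of the number and degrees of the black vertices met along short initial ancestral lines, which is itself read off the {\L}ukasiewicz path near the origin. Matching this with the continuity of $\mathcal{Z}^\uparrow$ at $0$ (inherited from that of $\mathcal{Z}$) upgrades the convergence to $[0, T]$, hence to $[0, \infty)$ for the Skorokhod topology, arbitrariness of $T$. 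The same scheme applies verbatim when $a = \tfrac{5}{2}$, using the Brownian-snake description of $\mathcal{Z}$ and the Bessel-$3$-driven version of $\mathcal{Z}^\uparrow$ already described above.

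The main obstacle is the transfer, to the conditioned objects $\mathcal{S}^\uparrow$ and $S^\infty$, of Le Gall--Miermont's delicate continuity/tightness estimates for the label construction --- and in particular the control of the label oscillations near time $0$, where the absolute-continuity density degenerates and one can no longer simply quote~\eqref{eq:convergence_LGM}.
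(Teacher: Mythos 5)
Your proposal is broadly sound and correctly identifies the two invariance-principle inputs (Caravenna--Chaumont for $S^\infty$, Le Gall--Miermont for the label construction), but it takes a genuinely different route from the paper on the label coordinate, and the obstacle you flag at the end is precisely what the paper's argument is designed to avoid. You work by restriction to $[\delta,T]$, transfer via the $h$-transform density $\tfrac{S^\infty_{\lfloor nT\rfloor}+1}{S^\infty_{\lfloor n\delta\rfloor}+1}\ind{\min S^\infty\ge 0}$, and then try to push $\delta\to 0$; this forces you to control the label oscillations on $[0,\lfloor n\delta\rfloor]$ by a separate estimate, exactly where the Radon--Nikodym density blows up. The paper instead splits the two tasks cleanly. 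For finite-dimensional convergence it does not use absolute continuity at all: it applies Skorokhod's representation theorem to the already-established convergence of $S^\infty$ and then reruns Le Gall--Miermont's Proposition~7 argument verbatim on the conditioned path --- only the large black branch-points, of length $\asymp n^{1/(a-1/2)}$, contribute, and they give independent Brownian bridges summed along the ancestral line, which is by definition $\mathcal{Z}^\uparrow$ (with the Marzouk variant when $a=\tfrac52$). For tightness it uses an absolute-continuity comparison, but with a different and better-chosen reference measure: for fixed $N$, the law of $(S^\infty_k,L^\infty_k)_{k\le N}$ is absolutely continuous with respect to the analogous pair for a single mobile conditioned to have more than $2N$ white vertices, which is in turn the law of the first large mobile of the unconditioned forest. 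Because the reference is itself conditioned to be large, the density stays under control uniformly down to $k=0$, and tightness of $L^\infty$ is inherited directly from that of $L^\circ$ in~\eqref{eq:convergence_LGM}; the labels come along for free because they are generated from the {\L}ukasiewicz path by attaching the same independent bridges in both models. In short, your scheme would work if the near-origin estimate were carried out, but the paper's choice of reference measure makes that whole step unnecessary, which is the main thing you should take from the comparison.
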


\begin{proof}[Proof sketch.]
Since the path $S^\infty$ has the law of the random walk $S^\circ$ conditioned to stay non-negative, the convergence of the former follows from that of the latter in~\eqref{eq:convergence_LGM}, see Caravenna \& Chaumont~\cite{Caravenna-Chaumont:Invariance_principles_for_random_walks_conditioned_to_stay_positive}. 
Let us next focus on the convergence of the finite-dimensional marginals of the label process. By appealing to Skorokhod's representation theorem, we may assume that the convergence of the {\L}ukasiewicz path holds almost surely. Recall the construction of the process $L^\infty$ from random bridges associated with each black branch-point. When $a < \frac{5}{2}$, the proof goes exactly as that of Proposition~7 in~\cite{Le_Gall-Miermont:Scaling_limits_of_random_planar_maps_with_large_faces}, it suffices to only consider the large black branch-points since the contribution of all the others is small; these large branch-points, with  length of order $n^{1/(a-1/2)}$, give at the limit, after a diffusive scaling $n^{1/(2a-1)}$, independent Brownian bridges, and the sum of these bridges evaluated at the corresponding times, along the ancestral line of a point is the definition of $\mathcal{Z}^\uparrow_t$. When $a = \frac{5}{2}$, we may similarly adapt the argument from~\cite{Marzouk:On_scaling_limits_of_planar_maps_with_stable_face_degrees}: here the branch-points are too small and the label increments between a white individual in the tree and its white grand-parent behave almost like i.i.d. random variables with finite variance, which gives at the limit a Brownian motion indexed by the infinite Brownian tree, which again is the definition of $\mathcal{Z}^\uparrow_t$.

Finally, tightness of $L^\infty$ follows by absolute continuity considerations with respect to the infinite forest. Indeed, for any $N \ge 1$ fixed, the law of the pair $(S^\infty_k, L^\infty_k)_{k \le N}$ is absolutely continuous with respect to the similar pair associated with a mobile conditioned to have more than $2N$ white vertices, which has the law of the first mobile in the infinite forest with more than $2N$ white vertices. This is well-known for the {\L}ukasiewicz path and more generally for conditioned random walk, and it extends to the label process by construction, since the latter is obtained from the {\L}ukasiewicz path and independent random bridges.
\end{proof}

For every $r \ge 0$, let $\sigma_r$ denote the first instant $i \ge 0$ such that the $i$-th white vertex of $\Tree_\infty$ is on its spine, and it is the first one on the spine with label smaller than $-r$. The preceding proposition yields the following asymptotic behaviour.

\begin{cor}\label{cor:temps_sortie_spine}
We have
\[\sigma_r \approx r^{2a-1}
\qquad\text{and}\qquad
\max_{k \le \sigma_r} |L^\infty_k| \approx r.\]
\end{cor}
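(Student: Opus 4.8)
The plan is to read off both statements from the scaling limit in Proposition~\ref{prop:convergence_processus} together with elementary properties of the limiting processes $\mathcal{S}^\uparrow$ and $\mathcal{Z}^\uparrow$. First I would reformulate $\sigma_r$ in terms of the two discrete processes. The $i$-th white vertex of $\Tree_\infty$ lies on the spine exactly when $S^\infty_i$ reaches a new running minimum of the part of the path not yet visited to the right, i.e. when $S^\infty_i \le \min_{j > i} S^\infty_j$; since $S^\infty$ is the $h$-transform of a centred random walk conditioned to stay non-negative, these ``future-minimum'' times are exactly the spine times, and they form an unbounded increasing sequence. Restricting the label process $L^\infty$ to these spine times gives a process which, up to the first such time, decreases by exactly one at each spine step (by the well-labelling rule along the spine) — so the spine labels are $0, -1, -2, \dots$ and $\sigma_r$ is precisely the $\lceil r\rceil$-th spine time, equivalently the first time $i$ with $L^\infty_i = -\lceil r \rceil$ among spine vertices.

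For the first estimate $\sigma_r \approx r^{2a-1}$, I would argue as follows. Write $N_i$ for the number of spine vertices among the first $i$ white vertices; by the scaling limit, $n^{-\frac{1}{a-1/2}} S^\infty_{\lfloor nt\rfloor} \to c_0 \mathcal{S}^\uparrow_t$, and the number of record times of $\mathcal{S}^\uparrow$ up to time $t$ rescales (via the local-time/ladder structure of the stable process conditioned to stay positive) like $t^{\theta}$ for the appropriate exponent; matching this with the discrete count and using that the spine labels along these record times decrease linearly, one gets that the index of the $r$-th spine vertex with label $-r$ is of order $r^{2a-1}$. Rather than go through the stable ladder-structure directly, it is cleaner to use the label process: since $L^\infty_{\lfloor n t\rfloor}$ rescaled by $n^{-\frac{1}{2a-1}}$ converges to $\sqrt{2c_0}\,\mathcal{Z}^\uparrow_t$, and the spine labels are the running minimum of $L^\infty$ along spine times (indeed the overall running minimum of $L^\infty$, since off-spine labels stay above the current spine label up to a finite fluctuation), we have that $\min_{k \le n} L^\infty_k$ rescaled by $n^{-\frac{1}{2a-1}}$ converges to $\min_{s \le 1}\sqrt{2c_0}\,\mathcal{Z}^\uparrow_s = -\sqrt{2c_0}\,\underline{\mathcal{Z}}^\uparrow_1$, a strictly negative a.s.\ finite random variable. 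Hence $-\min_{k\le n} L^\infty_k \approx n^{\frac{1}{2a-1}}$, and inverting this relation (using monotonicity of the running minimum) gives $\sigma_r$, the first time the running minimum of $L^\infty$ drops below $-r$, of order $r^{2a-1}$; the $\approx$ follows from the fact that the limit is a.s.\ positive and finite, which rules out both degeneracies in the definition of $\approx$.

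For the second estimate, I would bound $\max_{k \le \sigma_r} |L^\infty_k|$ above and below. The lower bound is immediate: $\min_{k \le \sigma_r} L^\infty_k \le -r$ by definition of $\sigma_r$, so $\max_{k\le\sigma_r}|L^\infty_k| \ge r$. For the upper bound, $\sigma_r \approx r^{2a-1}$ from the first part means that with high probability $\sigma_r \le C r^{2a-1}$ for large $C$; on that event $\max_{k \le \sigma_r}|L^\infty_k| \le \max_{k \le C r^{2a-1}} |L^\infty_k|$, and by the scaling limit this last quantity rescaled by $(Cr^{2a-1})^{-\frac{1}{2a-1}} = C^{-\frac{1}{2a-1}} r^{-1}$ converges to $\sqrt{2c_0}\,\sup_{s\le 1}|\mathcal{Z}^\uparrow_s|$, an a.s.\ finite random variable, whence $\max_{k\le\sigma_r}|L^\infty_k| \lesssim r$. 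Combining the two directions gives $\max_{k\le\sigma_r}|L^\infty_k|\approx r$.

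The main obstacle I anticipate is the first estimate, specifically making rigorous the claim that the overall running minimum $\min_{k\le n} L^\infty_k$ of the label process coincides (up to bounded fluctuations irrelevant at the scale $n^{1/(2a-1)}$) with the minimal spine label, and that this passes cleanly to the limit — one must check that off-spine white vertices never have labels far below the current spine label, which follows from the well-labelling constraint (each label increment along an ancestral line is bounded below by the bridge increments, which are $\ge -1$ per step but, more to the point, a subtree hanging off a spine vertex of label $\ell$ can only produce labels that drift down by $O(\text{subtree depth})$, and the relevant subtrees are short compared to $n$). Equivalently, and more robustly, one uses that $\mathcal{Z}^\uparrow$ inherits from $\mathcal{S}^\uparrow$ the property that its infimum over $[0,1]$ is attained and is strictly negative a.s., so the inversion $\sigma_r \leftrightarrow \underline{L^\infty}$ is well-behaved; establishing $\underline{\mathcal{Z}}^\uparrow_1 < 0$ a.s.\ and $\underline{\mathcal{Z}}^\uparrow_t \to -\infty$ a.s.\ as $t\to\infty$ (so that $\sigma_r < \infty$ for all $r$) is the only genuinely analytic point, and it follows from the self-similarity of $(\mathcal{S}^\uparrow,\mathcal{Z}^\uparrow)$ together with a $0$–$1$ law.
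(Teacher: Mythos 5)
Your plan has a genuine error at its core. You claim that, by the well-labelling rule, the labels along the spine decrease by exactly one at each white spine vertex, so that the spine labels are $0, -1, -2, \dots$ and $\sigma_r$ is simply the $\lceil r\rceil$-th spine time. This is false: the well-labelling rule says that around each black vertex the label increments are $\ge -1$ and form a bridge, so the label of a white spine vertex differs from that of its white grandparent by a random integer in $\Z_{\ge -1}$, not by a deterministic $-1$. Along the spine the labels form a random walk conditioned to drift to $-\infty$; the paper's proof makes explicit that in the limit this spine-label process is a Brownian motion (when $a=5/2$) or a $2(a-3/2)$-stable symmetric L\'evy process (when $a<5/2$). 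Your stated reformulation of $\sigma_r$ as ``the $r$-th spine time'' therefore does not hold, and the argument built on it does not get off the ground.

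Your fallback, routing through the overall running minimum $\min_{k\le n}L^\infty_k$, also does not directly address the quantity at hand. By definition $\sigma_r$ is the first index at which a \emph{spine} vertex has label below $-r$, while the running minimum of $L^\infty$ also sees off-spine vertices, whose labels can drop strictly below the current spine value by amounts comparable to the diffusive scale (these are exactly the ``tentacles'' the paper controls elsewhere). The paper's continuum analogue of $\sigma_r$ is the first passage below $-1$ of $\sqrt{2c_0}\,\mathcal{Z}^\uparrow$ \emph{restricted} to the future-infimum times of $\mathcal{S}^\uparrow$, not of the unrestricted process; you acknowledge this gap in your last paragraph but the sketch you give (``off-spine labels stay above the current spine label up to a finite fluctuation'') is not justified and is not obviously true at the relevant scale. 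Finally, you do not address the only genuinely delicate point, namely the continuity of the first-passage functional under Skorokhod convergence: the paper explicitly verifies that the limiting spine-label process crosses the level $-1$ strictly (Brownian motion oscillates across it immediately; the stable process jumps strictly below), which is exactly what is needed to transfer Proposition~\ref{prop:convergence_processus} to convergence of $(\sigma_r,\max_{k\le\sigma_r}|L^\infty_k|)$. The second estimate (upper bound on $\max_{k\le\sigma_r}|L^\infty_k|$ given $\sigma_r\lesssim r^{2a-1}$) is sound and matches the paper, but the rest of the proposal needs the corrections above.
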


\begin{proof}
It is clear from the definition that the times of visit of a white vertex on the spine correspond to those times $i \ge 0$ such that $S^\infty_i = \min_{j \ge i} S^\infty_j$. The continuum analogue of $\sigma_r$ is the first-passage time $ \Sigma_{\alpha}$  below $-\alpha<0$ of the process $ \sqrt{2 c_{0}}\mathcal{Z}^\uparrow$ restricted to those times $t \ge 0$ such that $\mathcal{S}^\uparrow_s \ge \mathcal{S}^\uparrow_t$ for all $s \ge t$, which is easily seen to be finite for all $\alpha >0$. We claim that 
\[\left(r^{-(2a-1)} \sigma_{r}, r^{-1} \max_{k \le \sigma_r} |L^\infty_k|\right) \cvdist[r] \left(\Sigma_{1}, \sup_{0 \leq s \leq \Sigma_{1}} | \sqrt{2 c_{0}} \mathcal{Z}^{\uparrow}_{s} | \right).\]
Indeed, in the case $a = 5/2$, the label process along the spine behaves as a Brownian motion which, almost surely, takes values smaller than $-1$ immediately after reaching $-1$, so the last display is implied by Proposition~\ref{prop:convergence_processus}. In particular $\sigma_r \approx r^{2a-1}$ and $ \max_{k \le \sigma_r} |L^\infty_k|  \approx r$ when $a=5/2$. When $a < 5/2$, the same phenomenon occurs, and in fact the limiting process of labels on the spine is a $2(a-3/2)$-stable symmetric L\'evy process, which jumps strictly below $-1$ when entering $(-\infty,-1]$. We conclude similarly.
\end{proof}

\subsection{Proof of the geometric estimates on maps}
\label{proof:technics}

We prove in this final section the volume estimates from Section~\ref{sec:estimees_intermediaires} we used in the proof of Theorem~\ref{thm:volume_peeling}, appealing to the results from the preceding section on labelled forests. Let us first start by considering $\Map_\infty$ and proving Proposition~\ref{prop:volume_boules} on the balls and their hulls, that is
\[|\Ball(\Map_\infty, r)| \approx r^{2a-1},
\quad
|\hBall(\Map_\infty, r)| \approx r^{2a-1},
\quad\text{and}\quad
\max\{\dgr(\rho, u); u \in \hBall(\Map_\infty, r)\} \approx r.\]

\begin{proof}[Proof of Proposition~\ref{prop:volume_boules}]
We suppose that $\Map_{\infty}$ is constructed from  $\Tree_\infty$ as in the last section (this corresponds to the case $p=1$) and let us suppose for convenience that we rooted the map is such a way that the origin vertex is the origin of the tree (otherwise it is at distance at most $1$ from it). Recall that $\Tree_\infty$ has a spine, and denote by $  \mathbb{T}_{r}$ the tree $\Tree_{\infty}$ obtained by chopping off the descendant of the first white vertex on the spine whose label drops below $-r-3$. Using the well-known ``cactus bound'', the proof of~\cite[Equation~19]{Benjamini-Curien:Simple_random_walk_on_the_uniform_infinite_planar_quadrangulation_subdiffusivity_via_pioneer_points} shows \emph{mutatis mutandis} (considering only white vertices and corners) that the following inclusion holds in terms of white vertices in $\Map_\infty$:
\begin{equation}\label{eq:inclusion1}
\hBall(\Map_\infty, r) \subset  \mathbb{T}_{r}.
\end{equation}
Using the notation of Corollary~\ref{cor:temps_sortie_spine}, the number of white vertices on the ``spine'' of $ \mathbb{T}_{r}$ and to its left is given by $\sigma_{r+3}$, and the number of white vertices on the spine and to the right has the same law by symmetry. By Corollary~\ref{cor:temps_sortie_spine} the number of white vertices of $ \mathbb{T}_{r}$ is  is therefore of order $r^{2a-1}$; note that we counted twice the $n$ vertices on the spine, which is negligible. 

Newt we claim that 
\begin{equation}\label{eq:tentacle1}
\max\{\dgr(\rho, u); u \in  \mathbb{T}_{r}\}
\le 2 + 3 \max_{u \in  \mathbb{T}_{r}} |\ell(u)|
\approx r
\end{equation}
where the distance $ \dgr$ is in the map $\Map_{\infty}$ and $\rho$ is its origin vertex. Indeed, the chain of successors starting from any white vertex in $ \mathbb{T}_{r}$ must coalesce with the chain starting from the root corner and this produces a path between those two vertices of length bounded above by $2+3\max_{u \in \mathbb{T}_{r}}|\ell(u)|$. This variable is of order $r$ by  Corollary~\ref{cor:temps_sortie_spine}.

We can then prove the three points of the proposition. The third point follows from~\eqref{eq:tentacle1} and~\eqref{eq:inclusion1} after noting that certainly $\max\{\dgr(\rho, u); u \in \hBall(\Map_\infty, r)\}$ is at least $r$. Using~\eqref{eq:inclusion1} together with $| \mathbb{T}_{r}| \approx r^{2a-1}$ then yields 
\[\hBall(\Map_\infty, r) \lesssim r^{2a-1}.\]
Finally, note that in terms of vertex set in the map we have $ \mathbb{T}_{r} \subset \Ball(\Map_\infty, R)$ where $R = 3+3\max_{u \in  \mathbb{T}_{r}}|\ell(u)| \approx r$. This yields a lower bound  $r^{2a-1}\lesssim| \Ball(\Map_\infty, r)| $ on the volume of balls, hence on their hull, and completes the proof of the proposition.
\end{proof}

We finally consider maps with a boundary. 

\begin{proof}[Proof of Proposition~\ref{prop:aperture}]
Suppose that $ \Map^{(p)}_{\infty}$ is constructed from a forest $ \mathcal{T}_{1}, \mathcal{T}_{2}, \dots, \mathcal{T}_{p}$ as in the preceding section where the $ \mathcal{T}_{i}, i \geq 2$ are independent two-type Galton--Watson trees and $ \mathcal{T}_{1}$ is the infinite one. Let $\Delta_{p}$ be the largest absolute value of a vertex's label belong to the finite trees $ \mathcal{T}_{2}, \dots, \mathcal{T}_{p}$. Recalling the law~\eqref{eq:increments_bridge} of the labels of the root of the trees, it follows from~\eqref{eq:convergence_LGM} that 
\[\Delta_{p} \approx p^{1/2}.\]
On the other hand, it is easy to see from the construction of $\Map_{\infty}^{(p)}$ from the forest that if $x,y$ are any two vertices on the boundary of $\Map_{\infty}^{(p)}$ then they correspond to two vertices in the forest for which the iterated chain of successors coalesce before $2 \Delta_{p}+2$ steps. Hence we deduce that 
\[\aper(\Map^{(p)}_{\infty}) \lesssim 4 \Delta_{p}+4 \approx p^{1/2}.\]
The lower bound is obtained by saying that $\aper(\Map^{(p)}_{\infty}) $ is at least the largest difference between labels of the root of the trees (they must belong to the boundary) and so of order $ p^{1/2}$.
\end{proof}

We finally prove Proposition~\ref{prop:volume_boules_cartes_bord} which we recall for the reader's convenience: for any $r$ large enough and any $p \ge 2r^{2}$, let us prove that
\[\Pr{|\Ball(\Map^{(p)}, r)| > \lambda r^{2a-1}} > c,\]
where $c > 0$ and $\lambda > 0$ are some constants which do not depend on $p$ and $r$.

\begin{proof}[Proof of Proposition~\ref{prop:volume_boules_cartes_bord}] Fix $ p \geq 2 r^{2}$. We will rely on the construction of the pointed map $\Map^{(p)}_{\bullet}$ from a forest $ \T_{1}, \dots, \T_{p}$ of i.id. two-type Galton--Watson trees together with the relation~\eqref{eq:sizebias} between $ \Map^{(p)}$ and $ \Map^{(p)}_{\bullet}$. We first assume for simplicity that the origin of $\Map^{(p)}_{\bullet}$ corresponds to the root of $ \T_{1}$ in the construction.
We will denote by $ \mathcal{E}_{\lambda}$ the following event:
\begin{enumerate}
\item The largest label in absolute value amongst the roots of $ \T_{1}, \dots, \T_{r^{2}}$ is smaller than $r/2$;

\item The maximum over $ \T_{1}, \dots, \T_{r^{2}}$ of the largest relative label in absolute value inside each tree (so each root is reset to $0$) is smaller than $r/2$ and the total number of vertices in these $r^2$ trees is larger than $\lambda r^{2a-1}$.
\item The total number of white vertices in $ \T_{1}, \dots, \T_{p}$ is less than $ p^{a- \frac{1}{2}}$.
\end{enumerate}
On the event $ \mathcal{E}_{\lambda}$ (still assuming that the origin of the map $\Map^{(p)}_{\bullet}$ is the origin of $ \T_{1}$) we have from (iii) that $| \Map^{(p)}_{\bullet}| \leq p^{a- \frac{1}{2}}$. Also, combining (i) and (ii) and using the usual bound on distances in the map we deduce that as vertex set of white vertices $ \bigcup_{1 \leq i \leq r^{2}} \T_{i} \subset \Ball( \Map^{(p)}_{\bullet}, 2r+2)$ and so the later has cardinality more than or equal to $ \lambda r^{2a-1}$. Using~\eqref{eq:sizebias} we can write 
\[\Pr{\Ball( \Map^{(p)}, 2r+2) > \lambda r^{2a-1}} 
\geq  \frac{1}{\E[1/| \Map^{(p)}_{\bullet}|]} \cdot \Es{\ind{ \mathcal{E}_{\lambda}} \cdot \frac{1}{| \Map^{(p)}_{\bullet}|}} 
\geq \frac{\P(\mathcal{E}_{\lambda})}{\E[p^{a- \frac{1}{2}}/| \Map^{(p)}_{\bullet}|]}.\]
By~\cite[Proposition 3.4]{Budd-Curien:Geometry_of_infinite_planar_maps_with_high_degrees} (and its easy extension to the case $a=5/2$, see~\cite[Eq. (51)]{Budd:The_peeling_process_of_infinite_Boltzmann_planar_maps}) we deduce that the denominator in the right-hand side is convergent and is thus bounded as $p \to \infty$. All it remains to see is that one can find $\lambda>0$ small enough so that $ \mathcal{E}_{\lambda}$ occurs with probability at least $c>0$ irrespectively of $p$ large: The first point is clearly satisfied with an asymptotically positive probability since the labels of the root of the trees converge after diffusive scaling towards a Brownian bridge see~\cite[Eq. (18)]{Le_Gall-Miermont:Scaling_limits_of_random_planar_maps_with_large_faces}. As for points (ii) and (iii), they are independent of point (i) and are clearly  satisfied with an asymptotically positive probability thanks to~\eqref{eq:convergence_LGM}. Et voil\`a. \end{proof}

{\small
\linespread{1}\selectfont % reset

}

\end{document}